\theoremstyle{plain}
\newtheorem{te}{Theorem}[section]
\newtheorem{lem}[te]{Lemma}
\newtheorem{co}[te]{Corollary}
\newtheorem{pr}[te]{Proposition}
\newtheorem{de}[te]{Definition}
\newtheorem{ex}[te]{Example}
\newtheorem{con}[te]{Conjecture}
\theoremstyle{remark}
\newtheorem{re}[te]{Remark}
\newtheorem*{ack*}{Acknowledgment}
\def\0{{\bf 0}}
\def\K{{\mathcal K}}
\def\T{{\mathbb T}}
\def\R{{\mathbb R}}
\def\Z{{\mathbb Z}}
\def\gcd{{\operatorname{gcd}}}
\def\nint{\mathop{\diagup\kern-13.0pt\int}}
\def\les{{\;\lessapprox}\;}\def\ges{{\;\gtrapprox}\;}
\def\Nc{{\mathcal N}}
\def\Mc{{\mathcal M}}
\def\Bc{{\mathcal B}}
\def\Ac{{\mathcal A}}
\def\Pc{{\mathcal P}}
\def\Sc{{\mathcal S}}\def\Vc{{\mathcal V}}
\def\Lc{{\mathcal L}}
\begin{document}
	\title[Level set estimates for the periodic Schr\"odinger maximal function]{Level set estimates for the periodic Schr\"odinger maximal function  on $\mathbb{T}^1$}
	\author{Ciprian Demeter}
	\address{Department of Mathematics, Indiana University, 831 East 3rd St., Bloomington IN 47405}
	\email{demeterc@iu.edu}

	\keywords{exponential sums, solution counting}
	\thanks{The author is  partially supported by the NSF grant  DMS-2349828}	
	
	\begin{abstract}
		We prove (essentially) sharp $L^4$ level set estimates for  the periodic Schr\"odinger maximal operator in a certain range of the cut-off parameter. 		
	\end{abstract}
	
	\maketitle
	\section{Introduction}

	We  use number theory and elementary graph theory  to prove the following level set estimate.
	\begin{te}
		\label{teorem1}
		Assume $\|a_n\|_{l^2}=1$ and $ N^{\frac14+\frac1{10}}\les \lambda\le N^{\frac12}$. Then	
		\begin{equation}
			\label{jcfoiepfcr[p,goo,g}
			|\{x\in[0,1]:\;\sup_{t}|\sum_{n=1}^Na_ne(nx+n^2t)|\ge \lambda\}|\les \frac{N}{\lambda^4}.
		\end{equation}
	\end{te}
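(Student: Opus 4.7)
The strategy is to linearize the supremum by a measurable section $t=t(x)$, Dirichlet-approximate $t(x)$ by $a/q+\beta$ with small $q$, invoke the classical Gauss-sum identity to reduce the pointwise bound on the exponential sum to one about shifts of the initial function $F(x):=\sum_{n=1}^N a_n e(nx)$, and finally count the level set by combining $L^2$ orthogonality with a combinatorial (Farey-neighborhood) incidence argument.

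\textbf{Setup and elimination of minor arcs.} Choose $t\colon E\to[0,1]$ measurable with $|\sum_n a_n e(nx+n^2 t(x))|\ge\lambda$ on $E$. A dyadic pigeonhole in the magnitudes of the $a_n$ reduces to the model case $a_n=|S|^{-1/2}\mathbf{1}_S(n)$ for some $S\subseteq\{1,\ldots,N\}$. Apply Dirichlet's theorem with parameter $N^2$: write $t(x)=a/q+\beta$ with $1\le q\le N^2$, $\gcd(a,q)=1$, $|\beta|\le 1/(qN^2)$, and partition $E=\bigsqcup_q E_q$ according to the denominator $q(x)$. Weyl's inequality furnishes a pointwise bound of $O(N^{1/4+\varepsilon})$ on the normalized exponential sum throughout the Weyl range of $q$ (roughly $q\in[N^{1/2+\varepsilon},N^{3/2-\varepsilon}]$), which is strictly smaller than $\lambda\ge N^{1/4+1/10}$; hence those minor arcs contribute nothing to $E$, and the $q\leftrightarrow N^2/q$ duality reduces the remaining analysis to $q\le N^{1/2}$.

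\textbf{Gauss sums and Farey reduction.} On a major arc the Gauss-sum identity
\[
\sum_{n=1}^N a_n e(nx+n^2 a/q+n^2\beta)=\frac{1}{q}\sum_{s=0}^{q-1}G(a,s,q)\,F_\beta\!\left(x+\frac{s}{q}\right),\qquad |G(a,s,q)|\le\sqrt{q},
\]
together with $|n^2\beta|\le 1$ (so that $F_\beta(x)=\sum_n a_n e(nx+n^2\beta)$ is pointwise comparable to $F=F_0$), forces $\sum_{s=0}^{q-1}|F(x+s/q)|\gtrsim\sqrt{q}\,\lambda$ on $E_q$, and in particular some shift satisfies $|F(x+s/q)|\gtrsim\lambda/\sqrt{q}$. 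Thus each $x\in E$ must lie within the union, over $(a,s,q)$ with $q\le N^{1/2}$, of translates by $-s/q$ of the super-level set $\{|F|\gtrsim\lambda/\sqrt{q}\}$, i.e., within $O(1/N)$ of a Farey fraction of small denominator.

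\textbf{Incidence count and main obstacle.} The $L^4$ estimate $\|F\|_{L^4}^4\lesssim N$ yields $|\{|F|\gtrsim\lambda/\sqrt{q}\}|\lesssim Nq^2/\lambda^4$, but a naive union bound over $(a,s,q)$ overcounts badly; one must instead count the union of Farey neighborhoods at scale $1/N$ directly. I would build a bipartite incidence graph whose vertices are a $1/N$-discretization of $E$ on one side and the Farey points $\{s/q:0\le s<q,\ q\le N^{1/2}\}$ on the other, with an edge recording the event $|F(x+s/q)|\gtrsim\mu$ at a dyadic threshold $\mu$. An elementary double-counting argument, exploiting both the concentration of $F$ (from $\|F\|_4^4\lesssim N$) and the rigidity of the Farey set (Farey fractions of denominator $\le Q$ are separated by $\ge Q^{-2}$), then upgrades the crude union bound to the sharp $N/\lambda^4$. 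This combinatorial step is the main obstacle: the threshold $\lambda\ge N^{1/4+1/10}$ is precisely where the incidence gain over the trivial $L^2$-orthogonality bound $\sum_{q\le Q}|E_q|\lesssim Q^2/\lambda^2$ becomes sufficient to close the estimate.
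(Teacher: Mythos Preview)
Your proposal has several genuine gaps that prevent it from going through, and it diverges fundamentally from the paper's method.

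First, two of your reductions are invalid. The pigeonhole ``reduces to $a_n=|S|^{-1/2}\mathbf 1_S(n)$'' discards the phases of the $a_n$; even keeping them, Weyl's inequality does not apply to $\sum_{n\in S}e(nx+n^2t)$ for an arbitrary $S\subset\{1,\dots,N\}$, since Weyl differencing needs the full range. So your minor-arc elimination has no mechanism for general coefficients. Separately, the claim that $F_\beta$ is ``pointwise comparable to $F=F_0$'' because $|n^2\beta|\le 1$ is false: that bound only places each phase $e(n^2\beta)$ on the unit circle, not near $1$. Dirichlet with parameter $N^2$ gives $|\beta|\le 1/(qN^2)$, which for small $q$ is merely $\lesssim 1/N^2$, so $n^2\beta$ can be $\sim 1$ and the deduction $|F(x+s/q)|\gtrsim\lambda/\sqrt q$ does not follow.

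Second, and more importantly, your final paragraph is where the entire difficulty lives, and it is not an argument but a hope. The paper proceeds very differently: it applies $TT^*$ (Cauchy--Schwarz in the $a_n$) to eliminate the coefficients altogether, reducing to the kernel $\K(x,t)=\sum_n w(n/N)^2 e(nx+n^2t)$ and the task of bounding $\sum_{r,r'}|\K(z_r-z_{r'})|$ for $1/N$-separated $z_r=(x_r,t_r)$. Sharp pointwise bounds for $\K$ localize the differences $z_r-z_{r'}$ to arithmetic sets $S_{Q,l}$, and the heart of the proof is a graph-theoretic/number-theoretic analysis of these incidences, pigeonholed over parameters $D,P,F$ recording $\gcd$ data in representations $\frac{a_1}{q_1}+\frac{a_2}{q_2}$. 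The threshold $\lambda\ges N^{1/4+1/10}$ arises precisely because this is the range in which the average of the relevant solution count $n_B$ over $\sim NQ^22^{2l}$ boxes stays $\les Q$; below $N^{1/10}$ that average blows up. None of this structure is captured by ``$\|F\|_4^4\lesssim N$ plus Farey separation,'' and indeed the paper exhibits explicit ``enemy'' configurations showing that naive counting over Farey neighborhoods overcounts by arbitrarily large factors.
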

	Our result covers a range of estimates in the following conjecture.
	\begin{con}
		\label{t1}
		Assume $\|a_n\|_{l^2}=1$ and $N^{\frac14}\le \lambda\le N^{\frac12}$. Then	
		$$|\{x\in[0,1]:\;\sup_{t}|\sum_{n=1}^Na_ne(nx+n^2t)|\ge \lambda\}|\les \frac{N}{\lambda^4}.$$
	\end{con}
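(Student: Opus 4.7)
The plan is to linearize the supremum, apply a $TT^*$ reduction to a bilinear estimate on the quadratic Weyl sum, and then close the \emph{full} conjectured range using Farey dissection, incidence counting, and a crucial additional ingredient at the endpoint scale $Q_0\sim\sqrt N$.

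Fix a measurable selector $t:E_\lambda\to[0,1]$ with $|\sum_n a_n e(nx+n^2 t(x))|\geq\lambda$ on $E_\lambda$. Starting from $\lambda^2|E_\lambda|\leq\int_{E_\lambda}|\sum_n a_n e(nx+n^2 t(x))|^2\,dx$, expanding in $(n,m)$ and applying Cauchy--Schwarz against the normalization $\|a\otimes\bar a\|_{l^2}=\|a_n\|_{l^2}^2=1$ yields
\[
\lambda^4|E_\lambda|^2\les\int_{E_\lambda}\!\!\int_{E_\lambda}|W_N(x-y,t(x)-t(y))|^2\,dx\,dy,\qquad W_N(u,v):=\sum_{n=1}^N e(nu+n^2 v).
\]
Thus the target $|E_\lambda|\les N/\lambda^4$ is equivalent to the bilinear bound $\int\!\!\int_{E_\lambda\times E_\lambda}|W_N|^2\les N|E_\lambda|$. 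Classical Gauss-sum analysis shows that at each dyadic denominator scale $Q_0\leq N$ the super-level region $\{|W_N|^2\sim N^2/Q_0\}$ is a union of $\sim Q_0^2$ rectangles of dimensions $\sim 1/N\times 1/N^2$ centred at rationals $(a/q,b/q)$ with $q\sim Q_0$; off all major arcs one has $|W_N|^2\les N$, so the minor-arc contribution to the bilinear integral is already $\les N|E_\lambda|^2\leq N|E_\lambda|$.

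At each dyadic $Q_0\leq N/\lambda^2$ I would encode the pairs $(x,y)\in E_\lambda\times E_\lambda$ with $(x-y,t(x)-t(y))$ landing in a $Q_0$-rectangle as edges of a bipartite graph $\mathcal G_{Q_0}$ between $1/N$-intervals meeting $E_\lambda$ and the $\sim Q_0^2$ Farey pairs $(a/q,b/q)$ of denominator $\sim Q_0$. A \emph{Farey-rigidity} lemma---if two distinct $1/N$-intervals share two distinct Farey shifts, a nontrivial rational identity is forced on the corresponding $t$-values---makes $\mathcal G_{Q_0}$ essentially $K_{s,t}$-free for appropriate $s,t$; combining a K\H{o}v\'ari--S\'os--Tur\'an type extremal bound with the spike weight $N^2/Q_0$ delivers the desired contribution $\les N|E_\lambda|$ at each scale. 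The principal obstacle is the critical scale $Q_0\sim\sqrt N$, corresponding exactly to the conjectured endpoint $\lambda\sim N^{1/4}$: there the $\sim N$ Farey fractions of denominator $\sqrt N$ are themselves $1/N$-spaced, matching the left-vertex resolution, so pair rigidity with small constants no longer yields the correct degree bound. To push across this scale I would iterate the $TT^*$ step one more time to obtain a quartic energy inequality in $W_N$, apply the parabolic $l^2$-decoupling of Bourgain--Demeter (with the $N^\epsilon$ loss absorbed into the endpoint where the target $|E_\lambda|\les 1$ is of unit order), and then return to the graph count at the sub-critical scales. This iterated $TT^*$/decoupling/incidence combination is, I believe, what is required to extend Theorem~\ref{teorem1} all the way from the restricted range $\lambda\geq N^{1/4+1/10}$ to the full conjectured range $\lambda\geq N^{1/4}$.
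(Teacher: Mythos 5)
This statement is Conjecture \ref{t1}, which the paper does not prove: it is open, and the entire paper is devoted to establishing only the restricted range $\lambda\ges N^{\frac14+\frac1{10}}$ (Theorem \ref{teorem1}), with even the extension to $\lambda\ges N^{\frac14+\frac1{12}}$ left conditional on the unresolved counting Conjecture \ref{conditional}. Your $TT^*$ reduction to a bilinear inequality for the Weyl kernel is sound and close in spirit to the paper's Section \ref{sec2}, but the two steps that would actually close the argument are asserted, not proved, and both fail as stated. First, the ``Farey-rigidity lemma'' claiming that the incidence graph between $1/N$-intervals and Farey pairs is essentially $K_{s,t}$-free is contradicted by Example \ref{exenemies}: there exist differences $(x,t)$ admitting $\ges Q^2/rq\ggg Q$ many $1/N$-separated representing fractions $b_1/q_1$, i.e.\ dense bipartite pieces that defeat any K\H{o}v\'ari--S\'os--Tur\'an bound with the degree you need. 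Moreover, Proposition \ref{averagevalues} shows that even the \emph{average} multiplicity $n_B^*\approx Q^4/N2^{2l}$ genuinely exceeds the required bound $Q$ once $M\lll N^{1/10}$, so the obstruction in the range you are trying to cover is quantitative, not a matter of sharpening constants; circumventing it requires something like the paper's $D,P,F$ pigeonholing and fork structure, and even that currently stops at $N^{1/10}$.

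Second, your proposed device for crossing the critical scale $Q_0\sim\sqrt N$ --- iterating $TT^*$ once more and invoking parabolic $l^2$-decoupling --- cannot supply the missing cancellation. Decoupling-type inputs for this problem yield only the $L^6$ estimate \eqref{strongestknown} and hence the $N^{1/3}$ bound in Conjecture \ref{Maincon}, and the paper \cite{FRW} shows that $N^{1/3}$ is sharp within the class of convex sequences amenable to decoupling; the paper explicitly emphasizes that its methods must exploit the arithmetic of the kernel $\K$ precisely because decoupling cannot reach $N^{1/4}$. So the step ``apply Bourgain--Demeter and return to the graph count'' is not a proof strategy with any identified mechanism for gaining over $N^{1/3}$. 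As written, the proposal is a plausible-sounding outline of a program, but it does not prove the conjecture, and its two load-bearing claims are respectively false in general and known to be insufficient.
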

	
	Apart from the $N^\epsilon$ losses hidden in the symbol $\les$, the conjectured level set estimates are sharp for all $\lambda$. Indeed, recall (see e.g. Proposition 13.4 in \cite{De}) that if $q\sim Q\ll N$ and
	$$x\in \bigcup_{b=0}^{q-1}\;[\frac{b}q-\frac{o(1)}{N},\frac{b}q+\frac{o(1)}{N}]\,$$
	then for each $q\sim Q$ we have
	$$|\sum_{n=1}^Ne(nx+\frac{n^2}q)|\sim \frac{N}{q^{1/2}}.$$
	When $Q=\frac{N}{\lambda^2}\ll N^{1/2}$, the intervals in $\bigcup_{q\sim Q}\bigcup_{1\le b\le q-1\atop{\gcd(b,q)=1}}\;[\frac{b}q-\frac{o(1)}{N},\frac{b}q+\frac{o(1)}{N}]$ are pairwise disjoint, so this set has measure $\sim \frac{Q^2}{N}= \frac{N}{\lambda^4}$. Moreover, letting $a_n=N^{-1/2}$, we find that for each $x$ in this set we have
	$$\sup_{t}|\sum_{n=1}^Na_ne(nx+n^2t)|\sim \frac{N^{1/2}}{Q^{1/2}}=\lambda.$$

	Conjecture \ref{t1} admits the following equivalent reformulation.
	\begin{con}
		\label{Maincon}	
		Assume $\|a_n\|_{l^2}=1$. Then for each $p\le 4$
		$$\|\sup_{t}|\sum_{n=1}^Na_ne(nx+n^2t)|\|_{L^p([0,1],dx)}\les N^{1/4}.$$
	\end{con}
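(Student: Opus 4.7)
Since $[0,1]$ has unit measure, Hölder's inequality gives $\|\cdot\|_{L^p}\le\|\cdot\|_{L^4}$ for $p\le 4$, so it suffices to prove the case $p=4$. Writing $F(x)=\sup_t|\sum_{n=1}^Na_ne(nx+n^2t)|$, the layer-cake identity yields
$$\|F\|_{L^4}^4=4\int_0^\infty\lambda^3|\{x\in[0,1]:F(x)\ge\lambda\}|\,d\lambda.$$
The plan is to split this integral into three dyadic ranges of $\lambda$ and apply a sharp level set estimate on each, leaning on Theorem \ref{teorem1} for the bulk of the range.

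By Cauchy-Schwarz, $F(x)\le\sum_n|a_n|\le N^{1/2}$ pointwise, so the integrand vanishes for $\lambda>N^{1/2}$. At the opposite end, for $\lambda\le N^{1/4}$ we use the trivial bound $|\{F\ge\lambda\}|\le 1$, which contributes $\int_0^{N^{1/4}}\lambda^3\,d\lambda\lesssim N$. In the range $N^{1/4+1/10}\les\lambda\le N^{1/2}$ covered by Theorem \ref{teorem1}, the level set estimate $|\{F\ge\lambda\}|\les N/\lambda^4$ gives
$$\int_{N^{1/4+1/10}}^{N^{1/2}}\lambda^3\cdot\frac{N}{\lambda^4}\,d\lambda\les N\log N,$$
which is $\les N$ under the convention that $\les$ absorbs $N^\epsilon$ factors. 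Taking fourth roots, each of these two contributions matches the required bound $\|F\|_{L^4}\les N^{1/4}$.

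The decisive, and currently missing, step is to establish $|\{F\ge\lambda\}|\les N/\lambda^4$ throughout the remaining gap $\lambda\in[N^{1/4},N^{1/4+1/10}]$. This is exactly the portion of Conjecture \ref{t1} that Theorem \ref{teorem1} leaves open, and I expect this to be the main obstacle. Via the substitution $Q=N/\lambda^2$ this gap corresponds to rational denominators $Q\in[N^{3/10},N^{1/2}]$, i.e.\ to $Q$ approaching the critical threshold $N^{1/2}$ at which the major arcs $[\tfrac{b}{q}-\tfrac{o(1)}{N},\tfrac{b}{q}+\tfrac{o(1)}{N}]$ in the sharpness example just barely remain disjoint. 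In this regime the number of competing rational centers is maximal, Gauss sum cancellation becomes the dominant mechanism, and the graph-theoretic bookkeeping on coincidences among the residues $\{n^2\pmod q\}$ that underlies Theorem \ref{teorem1} becomes considerably more delicate.

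A reasonable route to closing the gap would be a bootstrap: inside the bad set one first localizes $x$ to a union of major arcs around $b/q$ with $q\sim Q$, uses the Gauss sum structure to replace $\sup_t$ by a sup over finitely many rational values of $t$, and then invokes a refined incidence/graph count (in the spirit of the arguments of Theorem \ref{teorem1}) to bound how often a given $x$ can be close to many rationals simultaneously. The principal difficulty is that as $Q\to N^{1/2}$ this incidence problem degenerates, and controlling it seems to require genuinely new number-theoretic input beyond the $q\les N^{3/10}$ regime treated here. Absent such an input, the argument above delivers the conjecture only up to an $N^{1/10}$ loss in the $L^4$ bound.
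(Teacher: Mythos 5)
The statement you were asked about is Conjecture \ref{Maincon}, which the paper does not prove; it is presented only as an equivalent reformulation (via the layer-cake/Chebyshev correspondence) of the level set Conjecture \ref{t1}, of which the paper establishes only the range $\lambda\ges N^{\frac14+\frac1{10}}$ in Theorem \ref{teorem1}. Your write-up correctly recognizes this: the reduction to $p=4$ by H\"older, the layer-cake splitting, the trivial bounds at the two ends, and the application of Theorem \ref{teorem1} on $\lambda\ges N^{\frac14+\frac1{10}}$ are all sound, and the gap you isolate, namely the missing level set estimate for $\lambda\in[N^{1/4},N^{\frac14+\frac1{10}}]$ (equivalently $Q=N/\lambda^2$ between $N^{3/10}$ and $N^{1/2}$), is exactly what the paper leaves open; the paper's own diagnosis (Proposition \ref{averagevalues} and the discussion of the case $D\les 1$) locates the obstruction in the failure of the average bound $n_B\les Q$ when $M\lll N^{1/10}$, which is consistent with, though more precise than, your heuristic about the incidence count degenerating as $Q\to N^{1/2}$. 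Two minor remarks: the paper asserts full \emph{equivalence} of Conjectures \ref{t1} and \ref{Maincon}, so a complete account would also note the converse direction (Chebyshev), and the fallback bound $\|F\|_{L^4}\les N^{7/20}$ that your partial argument yields is actually weaker than the known $N^{1/3}$ bound from the $L^6$ estimate \eqref{strongestknown}, so Theorem \ref{teorem1} does not by itself improve the $L^4$ moment. In short, there is no proof in the paper to compare against, and your proposal accurately reflects what is and is not provable with the paper's results.
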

	This is an estimate about the solution of the free Schr\"odinger equation on the torus. If proved, it would imply that, under enough Sobolev regularity, the solution converges to the initial data almost everywhere as $t\to 0$. The Euclidean version of this problem has been solved by Carleson \cite{Ca}. There are higher dimensional versions of Conjecture \ref{t1} that will not be investigated here. The more recent papers \cite{DGL}, \cite{DZ} solved the Euclidean version in all dimensions.
	
	The best upper bound in the literature for Conjecture \ref{Maincon} is $\les N^{\frac13}$, in the range $1\le p\le 6$, see \cite{MV}. This is only sharp for $p=6$. The simplest argument to prove it uses  Bourgain's $L^6$ inequality
	\begin{equation}
		\label{strongestknown}
		\|\sum_{n=1}^Na_ne(nx+n^2t)\|_{L^6([0,1]^2,dxdt)}\les \|a_n\|_{l^2}
	\end{equation}
	and the fact that $|\sum_{n=1}^Na_ne(nx+n^2t)|$ is essentially constant in $t$ at scale $1/N^2$.

	No level set estimates \eqref{jcfoiepfcr[p,goo,g} exist in the literature, neither do they follow from the moment estimate \eqref{strongestknown} or from the known $N^{1/3}$ bound in Conjecture \ref{Maincon}. On the other hand, in the last section we show how this latter bound is easily recovered using our estimates.

	In Section \ref{sec2} we use a variant of the $TT^*$
	method to reduce Conjecture \ref{t1} to a (potentially stronger) conjecture with deep arithmetic flavor. We are left with a question that involves the quadratic  kernel $\K$. Using the known sharp estimates for its size, we are led to considering Conjecture \ref{c1}. The rest of the paper is devoted to verifying this conjecture in a certain range. We develop and operate in a rather combinatorial framework that takes advantage of sharp number theoretical input. In  Section \ref{sub9} we refine the combinatorics to suggest a further improvement to the range $\lambda\ges N^{\frac14+\frac1{12}}$, subject to some potentially difficult solution counting. Apart from this latter step of purely number theoretic nature, the rest of the argument works (and in fact is explicitly written) for $\lambda\ges N^{\frac14+\frac1{12}}$. Further improvements of this range seem possible with a more efficient use of various structures that arise in the argument. 
	
	The  paper \cite{FRW} shows that the upper bound $N^{1/3}$ is sharp for Conjecture \ref{Maincon}, within the general framework of convex sequences that fall under the scope of $l^2$ decoupling, \cite{BD}. Our methods here exploit the specific nature of the kernel $\K$, and are fundamentally different from decoupling. Notably, our argument involves no Fourier analysis.
	
	We would like to mention that Conjecture \ref{Maincon} has been verified when all $a_n$ are equal to 1, first by Barron \cite{Ba} with epsilon losses, and shortly after by Baker \cite{Bak}. The latter identified the precise magnitude
	$$\|\sup_{t}|\sum_{n=1}^Ne(nx+n^2t)|\|_{L^p([0,1],dx)}\sim N^{a(p)}(\log N)^{b(p)}$$
	for each $p\ge 1$.
	
	Specializing to other sequences $a_n$, Theorem \ref{teorem1} leads to some potentially interesting consequences. We mention one of them, for reader's amusement. It may hint about the number theoretic flavor of our argument, and about the potential difficulties involved in proving the full Conjecture \ref{t1}.
	\begin{co}
		Let $\Sc_N$ be a subset of $\{1,2,\ldots,N\}$ with density $\ges 1$, such as the primes, or any of their subsets of logarithmic density. Then for each $\theta\in[0,1]$
		$$|\{x\in[0,1]:\;\sup_{t}|\sum_{n\in\Sc_N}e(nx+n^2t+n^3\theta)|\ge \lambda\}|\les \frac{N}{\lambda^4}$$
		for each $\lambda\ges N^{\frac14+\frac1{10}}$.
	\end{co}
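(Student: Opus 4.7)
My plan is to deduce the Corollary directly from Theorem~\ref{teorem1}. The key observation is that, since $\theta\in[0,1]$ is a fixed parameter (it varies with neither $x$ nor $t$), the factor $e(n^3\theta)$ depends only on the index $n$ and can therefore be absorbed into the coefficient sequence. This reduces a statement involving a cubic phase and a restricted index set to the hypothesis of Theorem~\ref{teorem1}, which places no structural constraint on the $a_n$ beyond their $\ell^2$ norm.

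Concretely, I would first set
$$a_n \;=\; \frac{1}{|\Sc_N|^{1/2}}\, e(n^3\theta)\quad\text{for } n\in\Sc_N, \qquad a_n \;=\; 0\quad\text{otherwise},$$
so that $\|a_n\|_{l^2}=1$. With this choice,
$$\sum_{n=1}^N a_n\, e(nx+n^2t) \;=\; \frac{1}{|\Sc_N|^{1/2}} \sum_{n\in\Sc_N} e(nx+n^2t+n^3\theta),$$
so the level set of the exponential sum appearing in the Corollary agrees, after the rescaling $\lambda \mapsto \lambda/|\Sc_N|^{1/2}$, with one controlled by Theorem~\ref{teorem1}. Next, I would apply Theorem~\ref{teorem1} to this sequence and use the density hypothesis $|\Sc_N| \ges N$ (which, up to the $N^\epsilon$ losses absorbed by $\ges$, holds for any subset of the primes of logarithmic density) to replace $|\Sc_N|$ by $N$ both in the bound and in the admissible range of $\lambda$.

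The main step requiring attention is the bookkeeping of powers of $|\Sc_N|$ and $N$; there is no substantive analytic obstacle, since at its core the Corollary is Theorem~\ref{teorem1} specialized to a specific coefficient sequence. What makes the statement mildly striking is that the same bound holds for \emph{every} $\theta\in[0,1]$ simultaneously (and uniformly over the choice of $\Sc_N$ within the density class), a robustness inherited from the fact that Theorem~\ref{teorem1} is agnostic about the internal structure of the $a_n$.
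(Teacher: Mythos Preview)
Your approach is correct and is exactly what the paper intends: the corollary is stated immediately after the sentence ``Specializing to other sequences $a_n$, Theorem~\ref{teorem1} leads to some potentially interesting consequences,'' and no separate proof is given. Your choice $a_n=|\Sc_N|^{-1/2}\,1_{\Sc_N}(n)\,e(n^3\theta)$ is precisely the specialization the author has in mind, and your observation that the fixed phase $e(n^3\theta)$ is absorbed for free into the coefficients is the whole point.

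One remark on the bookkeeping you flag but do not carry out. With $\mu=\lambda/|\Sc_N|^{1/2}$ and $|\Sc_N|\approx N$, Theorem~\ref{teorem1} yields the bound $\les N/\mu^4\approx N^3/\lambda^4$ in the range $\mu\ges N^{1/4+1/10}$, i.e.\ $\lambda\ges N^{3/4+1/10}$. The corollary as printed keeps the \emph{normalized} scaling of Theorem~\ref{teorem1} (bound $N/\lambda^4$, threshold $N^{1/4+1/10}$), so if your arithmetic produces $N^3/\lambda^4$ and the shifted threshold, that is the expected outcome of the rescaling, not an error on your part.
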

	\medskip
	
	\begin{ack*}
		I am grateful to Alex Barron for sharing with me a Fourier analytic proof of Conjecture \ref{c1} in the range $M\ggg N^{1/8}$. I have also benefited from conversations with Ruixiang Zhang regarding graph theoretic aspects of the problem. I would like to thank Alexandru Zaharescu for discussions related to Conjecture \ref{conditional}. I am grateful to the referee for the careful reading and for valuable suggestions. 	
	\end{ack*}

	\textbf{Notation.} Throughout this paper the main scale parameter will be denoted by $N$. The following notation will be used when comparing various other parameters $A=A_N,B=B_N$, as $N\to\infty$.
	
	We will write either $A=o(B)$ or $A\ll B$ (or $B\gg A$) to denote the fact that $A\le cB$ for some small enough absolute constant $c>0$ (independent of $N$).
	
	The notation  $A=O(B)$ or $A\lesssim B$ (or $B\gtrsim A$) will refer to the inequality $A\le CB$, for some potentially large, but still universal, constant $C$.
	
	We write $A\sim B$ if $A\lesssim B$ and $B\lesssim A$.
	
	We write $A\les B$ (or $B\ges A$), whenever $A\lesssim_\epsilon N^\epsilon B$ for all $\epsilon>0$.
	We write $A\approx B$ if $A\les B$ and $B\les A$.
	
	Finally, we write $A\lll B$ (or $B\ggg A$), whenever $A\lesssim N^{-\epsilon} B$ for some $\epsilon>0$. Note that $A\lll B$
	and $A\ges B$ are mutually exclusive.

	Throughout the argument we will use the fact that
	\begin{equation}
		\label{transitiviti}
		A\les B\lll C\implies A\lll C.\end{equation}

	\section{Initial reductions}
	\label{sec2}
	Let $w$ be an arbitrary Schwartz function supported on $[-2,2]$. It will suffice to prove
	$$|\{x:\;\sup_{t}|\sum_{n\in\Z}a_nw(\frac{n}{N})e(nx+n^2t)|\ge \lambda\}|\les \frac{N}{\lambda^4}.$$
	
	We write $\lambda=MN^{1/4}$, with $1\le M\le N^{\frac14}$. As a consequence of the Fourier Uncertainty Principle, the function $|\sum_{n\in\Z}a_nw(\frac{n}{N})e(nx+n^2t)|$ is essentially constant for $(x,t)$ in any box of size $1/N\times 1/N^2$.
	We fix a choice of $1/N$-separated $x_r\in [0,1]$ and pick $t_r\in [0,1]$, $1\le r\le R$ such that
	\begin{equation}
		\label{e1}
		|\sum_{n\in\Z}a_nw(\frac{n}{N})e(nx_r+n^2t_r)|\ge MN^{1/4}.
	\end{equation}
	We need to prove that
	$$R\les \frac{N}{M^4}.$$ 	
	\begin{re}
		\label{duiweyfucfi90gi}		
		It can be easily shown using $L^2$ orthogonality that each $t_r$  is associated with at most $\lesssim N^{1/2}/M^2$ points $x_r$.
	\end{re}
	
	\smallskip
	
	The rest of the argument will depend critically on the way we handle inequality \eqref{e1}.
	Let $|c_r|=1$ be such that
	$$|\sum_{n\in\Z}a_nw(\frac{n}{N})e(nx_r+n^2t_r)|=c_r\sum_{n\in\Z}a_nw(\frac{n}{N})e(nx_r+n^2t_r).$$
	Using Cauchy--Schwarz we find
	\begin{align*}
		RMN^{1/4}&\le \sum_{r=1}^Rc_r\sum_{n\in\Z}a_nw(\frac{n}{N})e(nx_r+n^2t_r)\\&=\sum_{n\in\Z}a_n\sum_{r=1}^Rc_rw(\frac{n}{N})e(nx_r+n^2t_r)\\&\le (\sum_{n\in\Z}|\sum_{r=1}^Rc_rw(\frac{n}{N})e(nx_r+n^2t_r)|^2)^{1/2}.
	\end{align*}
	Expanding the square and using the triangle inequality leads to
	$$RMN^{1/4}\le (\sum_{r,r'}|\sum_{n\in\Z}w(\frac{n}{N})^2e(n(x_r-x_{r'})+n^2(t_r-t_{r'}))|)^{1/2}.$$
	Writing $z_r=(x_r,t_r)$ and defining the weighted Gauss sum
	$$\K(x,t)=\sum_{n\in\Z}w(\frac{n}{N})^2e(nx+n^2t),$$
	we arrive at
	\begin{equation}
		\label{e2}
		R^2M^2N^{1/2}\le \sum_{r,r'}|\K(z_r-z_{r'})|.
	\end{equation}
	The derivation of \eqref{e2} is similar to the argument in \cite{Bo}. Note that this argument eliminates the coefficients $a_n$ and reduces things to a question about the kernel  $\K$. It is a close variant of the $TT^*$ method.
	In \cite{Demsing}, we used the $TT^*$ method  to prove sharp results for a class of special cases in Conjecture \ref{Maincon}. 
	\medskip

	To exploit \eqref{e2}, we recall the following classical upper bounds for $\K$, see e.g. Lemma 9.2 in \cite{De1}.  Due to periodicity, we may replace the range $[0,1]$ for $t$ with $[-\frac1N,1-\frac1N]$. Letting $J_{1,0}=[-\frac1N,\frac1N]$ and $J_{q,a}=[\frac{a}q-\frac1{Nq},\frac{a}q+\frac1{Nq}]$ for each $2\le q\le N$ and $1\le a\le q-1$ with $(a,q)=1$, we recall that these intervals cover $[-\frac1N,1-\frac1N]$. The intervals may overlap, but that will be irrelevant for our argument.
	\begin{lem}
		\label{22}
		
		Assume $t\in J_{q,a}$, and assume that either $|t-\frac{a}q|\sim \frac{1}{2^lNQ}$ for $2^{l}<N/Q$ or that $|t-\frac{a}q|\lesssim \frac{1}{2^lNQ}=\frac{1}{N^2}$ when $2^l=N/Q$. Then for each $\epsilon>0$ we have
		$$|\K(x,t)|\lesssim {N^{1/2}}2^{l/2}$$
		if  $$x\in \Mc=\bigcup_{b\in\Z}\;[\frac{b}{q}- \frac{N^{\epsilon}}{Q2^l},\frac{b}{q}+ \frac{N^{\epsilon}}{Q2^l}],$$
		and
		$$|\K(x,t)|\lesssim_\epsilon N^{-100}$$
		if  $$x\not\in \Mc.$$
		
	\end{lem}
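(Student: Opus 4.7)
The plan is to combine an arithmetic decomposition with an oscillatory integral estimate, following the standard route for exponential sums with rational phase. First, I would isolate the arithmetic part by splitting $n$ into residue classes modulo $q$: writing $n = qm+s$ with $0 \le s \le q-1$ and $\beta = t - a/q$, the identity $(qm+s)^2(a/q) \equiv s^2 a/q \pmod 1$ gives
$$\K(x,t) = \sum_{s=0}^{q-1} e(s^2 a/q) \sum_{m\in\Z} w\bigl(\tfrac{qm+s}{N}\bigr)^2 e\bigl((qm+s)x + (qm+s)^2\beta\bigr).$$

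Next I would apply Poisson summation in $m$ and change variables $u = qm+s$ inside each resulting Fourier integral. The $s$-sum collapses into a complete quadratic Gauss sum $S(a,\xi;q) = \sum_{s=0}^{q-1} e\bigl((as^2+\xi s)/q\bigr)$, bounded by $\lesssim q^{1/2}$ after completing the square (using $(a,q)=1$). This yields
$$\K(x,t) = \frac{1}{q}\sum_{\xi\in\Z} S(a,\xi;q)\, I(x-\xi/q), \qquad I(y) := \int w(u/N)^2 e\bigl(uy+u^2\beta\bigr)\,du,$$
so the problem reduces to estimating the oscillatory integral $I(y)$.

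The phase $\phi(u) = uy+u^2\beta$ has derivative $y+2u\beta$, vanishing at $u_0 = -y/(2\beta)$. When $|y| \lesssim 1/(2^l Q) \sim N|\beta|$, $u_0$ lies in (or near) the support of $w^2$, and stationary phase gives $|I(y)| \lesssim |\beta|^{-1/2} \sim \sqrt{2^l N Q}$; the assumption $2^l \le N/Q$ ensures that this improves on the trivial bound $|I(y)| \lesssim N$. When $|y|$ exceeds the threshold $1/(2^l Q)$ by a factor of at least $N^\epsilon$, $\phi'$ is non-vanishing of size $\gtrsim |y|$ throughout the support of $w^2$, and repeated integration by parts against the Schwartz weight produces super-polynomial decay, in particular $|I(y)| \lesssim N^{-100}$ (choosing the integration-by-parts parameter large in terms of $\epsilon$).

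Putting it together: for $x \in \Mc$, only the $\les 1$ frequencies $\xi$ with $|x - \xi/q| \lesssim N^\epsilon/(2^l Q)$ contribute non-negligibly, each of size $\lesssim q^{-1/2}\sqrt{2^l N Q} \sim \sqrt{2^l N}$, giving $|\K(x,t)| \les N^{1/2}2^{l/2}$; for $x \notin \Mc$ every term in the $\xi$-sum is negligible and summing against the $q^{1/2}$ Gauss sum factor still gives $|\K(x,t)| \lesssim N^{-100}$. The main technical point is the oscillatory integral analysis of $I(y)$, specifically handling the transition between the stationary and non-stationary regimes uniformly across the boundary case $2^l = N/Q$, together with absorbing the various $N^\epsilon$ losses (from counting nearby $\xi$ and from stationary phase remainders) into the $\lesssim$ notation.
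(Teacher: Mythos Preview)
The paper does not prove this lemma; it is quoted as a classical estimate with a reference to Lemma~9.2 in \cite{De1}. Your sketch is the standard proof and is correct: sort $n$ into residues modulo $q$, apply Poisson summation in the quotient variable to produce the Gauss sum $S(a,\xi;q)$ times the oscillatory integral $I(x-\xi/q)$, then use $|S(a,\xi;q)|\lesssim q^{1/2}$ together with stationary phase ($|I(y)|\lesssim|\beta|^{-1/2}$ when $|y|\lesssim N|\beta|$) and non-stationary phase (rapid decay when $|y|\ge N^\epsilon/(Q2^l)$). One minor sharpening: in the $x\in\Mc$ case you actually get the clean $\lesssim$ bound stated in the lemma rather than $\les$, since the points $\xi/q$ are $1/q\sim 1/Q$-separated while the stationary-phase window has width $\sim 1/(Q2^l)\le 1/Q$, so only $O(1)$ frequencies $\xi$ fall in the main regime and the rest form a rapidly convergent tail; the $N^\epsilon$ enlargement in $\Mc$ is there only to make the non-stationary bound for $x\notin\Mc$ work, not because the main term loses $N^\epsilon$.
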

	Let us write $\Ac(1)=\{0\}$, and for $2\le q\le N$
	$$\Ac(q)=\{1\le a\le q-1:\;\gcd(a,q)=1\}.$$
	Fix $\epsilon$. All further uses of $\les$ will hide implicit dependence on this particular $\epsilon$. For dyadic $1\le Q\le N$ and for $1\le 2^l\lesssim \frac{N}{Q}  $ we write
	$$S_{Q,l}=\bigcup_{q\sim Q}\bigcup_{a\in \Ac(q)}\bigcup_{0\le b\le q-1}\;[\frac{b}{q}-\frac{N^\epsilon}{2^lq},\frac{b}{q}+\frac{N^\epsilon}{2^lq}]\times [\frac{a}q-\frac{1}{2^lqN},\frac{a}q+\frac{1}{2^lqN}].$$
	The previous lemma shows that for each $(x,t)\in \R^2$ we have
	$$|\K(x,t)|\lesssim 2^{l/2}N^{1/2}\sum_{1\le Q\le N}\sum_{1\le 2^l\lesssim N/Q}\tilde{1}_{S_{Q,l}}(x,t)+ N^{-100}.$$
	We have denoted by $\tilde{1}_{S_{Q,l}}$ the $1\times 1$-periodic extension of $1_{S_{Q,l}}$.
	
	\begin{re}
		\label{dyad}
		In fact, slightly more is true, and this will be used later. Writing
		$$S_{Q,l}^{dyad}=\bigcup_{q\sim Q}\bigcup_{a\in \Ac(q)}\bigcup_{0\le b\le q-1}\;[\frac{b}{q}-\frac{N^\epsilon}{2^lq},\frac{b}{q}+\frac{N^\epsilon}{2^lq}]\times \{t:\;|t-\frac{a}q|\sim \frac{1}{2^lqN}\}$$
		for $2^l<N/Q$ and
		$$S_{Q,l}^{dyad}=S_{Q,l}$$
		for $2^l=N/Q$,
		we still have
		$$|\K(x,t)|\les 2^{l/2}N^{1/2}\sum_{1\le Q\le N}\sum_{1\le 2^l\lesssim N/Q}\tilde{1}_{S^{dyad}_{Q,l}}(x,t)+ N^{-100}.$$
	\end{re}

	Combining this with \eqref{e2} and pigeonholing, we find that there must be a $Q$ and some $l$ such that
	$$R^2\les \frac{2^{l/2}}{M^2}\sum_{r,r'}\tilde{1}_{S_{Q,l}}(z_r-z_r').$$
	We note that we must have
	\begin{equation}
		\label{e5}
		M^4\les 2^l,\;\text{and thus also }Q\les \frac{N}{M^4}.
	\end{equation}
	At this point, we make the following conjecture.
	\begin{con}\label{c1}Assume $\|a_n\|_{l^2}=1$.
		Assume $1\le M\le N^{1/4}$ and $1\le 2^l\lesssim \frac{N}{Q}$.
		
		Let $z_r=(x_r,t_r)$, $1\le r\le R$, with $x_r$ $1/N$-separated, satisfy 	
		\begin{equation}
			\label{e1again}
			|\sum_{n\in\Z}a_nw(\frac{n}{N})e(nx_r+n^2t_r)|\ge MN^{1/4}\end{equation}
		and
		\begin{equation}
			\label{e7}
			R^2\les \frac{2^{l/2}}{M^2}\sum_{r,r'}\tilde{1}_{S_{Q,l}}(z_r-z_r').\end{equation}
		Then we have
		\begin{equation}
			\label{e7600}
			R\les M^{-4}N.
		\end{equation}
	\end{con}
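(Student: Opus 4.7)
The plan is to recast \eqref{e7} as a graph-theoretic edge count and combine it with the arithmetic structure imposed by \eqref{e1again}. I would build a graph $G$ on the vertex set $\{z_r\}_{r=1}^{R}$, joining $r$ to $r'$ precisely when $z_r-z_{r'}\in S_{Q,l}$ modulo $1$. Each edge is naturally labeled by a triple $(q,b,a)$ with $q\sim Q$ and $\gcd(a,q)=1$, recording which rectangle of $S_{Q,l}$ contains the difference. By \eqref{e7} the number of edges is $\ges R^2M^2/2^{l/2}$, and after dyadic pigeonholing I may assume all edges share a common $q$ and, using Remark \ref{dyad}, a common scale in the $t$-direction as well.

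Next, I would bound the degrees in $G$ by combining the $1/N$-separation of the $x_r$ with the orthogonality observation in Remark \ref{duiweyfucfi90gi}. Fixing a vertex $z_r$ and a label $(b,a)$, the candidate neighbors $z_{r'}$ lie in a product of two short intervals, of lengths $\sim N^\epsilon/(2^lq)$ in $x$ and $\sim 1/(2^lqN)$ in $t$; the separation hypothesis bounds the count of admissible $x_{r'}$, while for each fixed $t_{r'}$ the orthogonality gives a further bound of $\lesssim N^{1/2}/M^2$ on the associated $x_{r'}$. These two inputs together produce a preliminary estimate relating $R$, $M$, $Q$, and $2^l$ alone.

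The crucial final step is to promote this preliminary estimate to the target bound $R\les N/M^4$ by using the arithmetic constraints from \eqref{e1again}. Standard Gauss-sum analysis forces each $z_r$ to lie close to a point whose arithmetic data $(q_r,a_r,b_r)$ has denominator $q_r$ in an appropriate dyadic range, and the edge condition then becomes a subtractive relation of the form $a/q\equiv a_r/q_r-a_{r'}/q_{r'}\pmod 1$ with $q\sim Q$. In this way \eqref{e7} translates into a lower bound on the number of additive quadruples $(q_1,a_1,q_2,a_2,q_3,a_3,q_4,a_4)$ in a carefully chosen set of rationals, to be matched against a purely number-theoretic upper bound.

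The main obstacle—also the source of the restricted range $\lambda\ges N^{1/4+1/10}$—is precisely this quadruple count, which must be estimated with a sharp exponent. A more efficient combinatorial organization of the edges is expected to push the range down to $\lambda\ges N^{1/4+1/12}$, conditional on a stronger solution-counting conjecture. The graph-theoretic bookkeeping and the pigeonholing over dyadic scales are routine in principle, but one must take care that the accumulated $N^\epsilon$ losses remain within the tolerance of the symbol $\les$, and that the interplay between the denominator $q$ labeling the edges and the individual denominators $q_r$ labeling the vertices is exploited rather than obscured by the union bounds.
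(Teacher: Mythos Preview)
Your proposal diverges from the paper's argument at the very first move and does not recover. Two concrete gaps:

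\textbf{Pigeonholing to a fixed $q$ is fatal.} You write that after dyadic pigeonholing you ``may assume all edges share a common $q$''. There are $\sim Q$ admissible values of $q$, so this costs a factor $Q$, and since $Q$ can be as large as $N/M^4$ this loss alone destroys any hope of reaching $R\les N/M^4$. The paper never fixes $q$; instead it keeps the full range $q\sim Q$ and organizes the edges via the \emph{common-neighbor} structure: for a popular pair $(v_1,v_2)$ each joint neighbor $v_3$ gives a representation $v_1-v_2=(v_1-v_3)+(v_3-v_2)$ with both summands in $S_{Q,l}$. The number of such representations is then controlled through three pigeonholed parameters $D\sim\gcd(q_1,q_2)$, $P$, $F$ that encode the arithmetic of the two denominators simultaneously (Lemmas~\ref{l7} and~\ref{l8}, Proposition~\ref{p134}). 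Your degree-bound approach never sees this two-denominator interaction, which is the entire source of the gain.

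\textbf{The role of \eqref{e1again} is not what you describe.} You propose to use Gauss-sum bounds to pin each individual $z_r$ near a rational and then count additive quadruples of such rationals. The paper does nothing of the sort: hypothesis \eqref{e1again} is invoked only once, in the case $P\ggg 2^l$ (Subsection~\ref{4.4}), and there it is applied \emph{wholesale} to a highly structured subset $S''$ of vertices (the ``fork''), by rerunning the $TT^*$ reduction of Section~\ref{sec2} on $S''$ at a new scale $(Q_1,l_1)$. The arithmetic rigidity of $S''$ (Lemma~\ref{structure}) forces $2^{l_1}\ggg 2^l$, which is what closes that case. The remaining cases $FD\les Q,\ P\les 2^l$ and $FD\ges Q$ are handled by the popular-pair count and a separate fork argument, respectively, and use \eqref{e1again} not at all. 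None of these three mechanisms appears in your outline.
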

	
	The previous discussion can be summarized as follows.
	\begin{pr}
		If Conjecture \ref{c1} holds for some $M$ (and all $l,Q$), then the level set estimate  $$|\{x\in [0,1]:\;\sup_{t}|\sum_{n=1}^Na_ne(nx+n^2t)|\ge \lambda\}|\les \frac{N}{\lambda^4}$$
		also holds for $\lambda\sim MN^{1/4}$.
	\end{pr}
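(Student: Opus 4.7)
The plan is to collate the reductions already carried out in Section~\ref{sec2} into one clean implication. Since essentially every step has been spelled out there, the work consists of verifying that the hypotheses of Conjecture~\ref{c1} are indeed satisfied for the data extracted from a generic level-set configuration.

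First, I would replace the sharp cutoff $\sum_{n=1}^N$ by the smooth weight $a_n w(n/N)$ with $w$ Schwartz and supported on $[-2,2]$; this costs only an absolute constant. Writing $\lambda=MN^{1/4}$ with $1\le M\le N^{1/4}$, the Fourier Uncertainty Principle tells us that $|\sum_n a_n w(n/N)e(nx+n^2t)|$ is essentially constant on boxes of size $1/N\times 1/N^2$. Thus the level set is, up to a constant factor, a union of intervals of length $1/N$ centered at a maximal $1/N$-separated collection $\{x_r\}_{r=1}^R\subset[0,1]$, each paired with some $t_r$ realizing (up to a constant) the supremum in $t$. The target inequality $|\cdot|\les N/\lambda^4=M^{-4}$ thereby reduces to $R\les N/M^4$.

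Next, I would run the $TT^*$/Cauchy--Schwarz manipulation exactly as in the derivation of \eqref{e2} to obtain
$$R^2 M^2 N^{1/2}\les \sum_{r,r'}|\K(z_r-z_{r'})|,$$
where $z_r=(x_r,t_r)$, and then insert the kernel estimate provided by Lemma~\ref{22} in the form recorded just afterwards:
$$|\K(x,t)|\les 2^{l/2}N^{1/2}\sum_{Q}\sum_{l}\tilde{1}_{S_{Q,l}}(x,t)+N^{-100}.$$
The $N^{-100}$ contribution is harmless (absorbed by the trivial bound $R\le N$), and the dyadic sum ranges over only $O((\log N)^2)$ pairs $(Q,2^l)$ with $1\le Q\le N$ and $1\le 2^l\lesssim N/Q$, so a pigeonholing step extracts a single pair for which
$$R^2\les \frac{2^{l/2}}{M^2}\sum_{r,r'}\tilde{1}_{S_{Q,l}}(z_r-z_{r'}).$$
This is precisely hypothesis \eqref{e7}; hypothesis \eqref{e1again} is just \eqref{e1} applied to the same $\{z_r\}$.

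With both hypotheses of Conjecture~\ref{c1} verified for this choice of $(Q,l)$, invoking the conjecture at the given $M$ yields $R\les M^{-4}N$, which is the desired bound. There is no real obstacle at this stage, since the proposition is purely a bookkeeping statement that packages the analytic reductions of Section~\ref{sec2}; the genuine difficulty --- actually proving Conjecture~\ref{c1} in the range relevant to Theorem~\ref{teorem1} --- is deferred to the combinatorial and number-theoretic work in the remainder of the paper.
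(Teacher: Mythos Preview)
Your proposal is correct and follows essentially the same route as the paper: the proposition is explicitly stated as a summary of the preceding discussion in Section~\ref{sec2}, and you have faithfully reproduced those reductions (smoothing, discretization via uncertainty, the $TT^*$/Cauchy--Schwarz step leading to \eqref{e2}, the kernel bound from Lemma~\ref{22}, and pigeonholing to a single $(Q,l)$). There is nothing to add.
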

	Conjecture \ref{c1} is true when $M\ggg N^{1/8}$, without hypothesis \eqref{e1again}, and with \eqref{e7} replaced by the weaker hypothesis
	\begin{equation}
		\label{e7weaker}
		R^2\les \frac{2^{l/2}}{M^2}\sum_{r,r'}\tilde{1}_{X_{Q,l}}(x_r-x_r'),\end{equation}
	where
	$$X_{Q,l}=\bigcup_{q\sim Q}\bigcup_{0\le b\le q-1}\;[\frac{b}{q}-\frac{N^\epsilon}{2^lq},\frac{b}{q}+\frac{N^\epsilon}{2^lq}].$$
	The proof (cf. \cite{Ba2}) follows the Fourier argument in \cite{Bo} and does not make use of the $t$ component.
	
	If only the weaker hypothesis \eqref{e7weaker} for the $x$ component is assumed, \eqref{e7600} is false when $M\les N^{1/8}$.  Indeed, assume $Q=2^l=N^{1/2}/100$. Then it is easy to see that $X_{Q,l}$ has measure $\sim 1$. Indeed, there are  $\sim Q^2\sim N$ distinct fractions $b/q$ with $q\sim Q$, and the corresponding intervals $[\frac{b}{q}-\frac{1}{N}, \frac{b}{q}+\frac{1}{N}]$ are pairwise disjoint subsets of $X_{Q,l}$.
	Using this and the fact that $X_{Q,l}$ is a union of intervals of length  $\ge 1/N$, we guarantee that $X_{Q,l}$ contains  $\sim N$ of the points $j/N$, $j\in\{1,\ldots, N\}$. For each such $j$, there are exactly $N$ pairs $(r/N,r'/N)$ with $r,r'\in\{1,\ldots, N\}$, such that $\frac{r}{N}-\frac{r'}{N}\equiv \frac{j}{N}\pmod 1$. This implies that $\tilde{1}_{X_{Q,l}}(\frac{r}{N}-\frac{r'}{N})=1$ for $\sim N^2$ such pairs. Thus, taking $x_r=r/N$ for $1\le r\le N$,  \eqref{e7weaker} is trivially satisfied since $M^4\les 2^l$. On the other hand, $R\les M^{-4}N$ fails when $M\ggg 1$, since $R=N$.
	\medskip

	Let us discuss a sharp example for Conjecture \ref{c1}. Fix a prime $q\sim Q$. Consider a maximal family of $1/N$-separated points $x_i$, each lying in the $1/Q2^l$ neighborhood of some $b/q$, $0\le b\le q-1$. There are $\sim N/2^l$ such  $x_i$. Consider the points $z_{i,j}=(x_{i,j},t_{i,j})=(x_i+\frac{j}{Q2^l},b/q)$, with $b$ the numerator corresponding to $x_i$ and $2\le j\le J$, for some $J\ll 2^l$.
	It follows that $x_{i,j}$ are $1/N$-separated. There are $R\sim JN/2^l$  points $z_{i,j}$.
	Note that if $x_i$ is near $b/q$ and $x_{i'}$ is near $b'/q$ with $b\not=b'$ then $\tilde{1}_{S_{Q,l}}(z_{i,j}-z_{i',j})=1$ for each $j$. There are $\sim \frac{1}{J}R^2$ such pairs.
	Thus, \eqref{e7} holds if $J\sim 2^{l/2}/M^2$. Note also that \eqref{e7600} is satisfied, in fact we have the superficially stronger estimate $R\sim \frac{N}{2^{l/2}M^2}$.
	\smallskip
	
	For the rest of the paper, we will drop the term $N^\epsilon$ in the definition of $S_{Q,l}$. We do so only to simplify notation, this has no impact on the argument.
	
	It remains to prove Conjecture \ref{c1} for $M\ges N^{1/10}$. We find it plausible that the conjecture should be true in the full range of $M$, in fact even without the hypothesis \eqref{e1again}. The reason we include \eqref{e1again} is because our argument in Section \ref{sec:four}  makes use of it at some point.
	
	\smallskip

	\section{A combinatorial approach for Conjecture \ref{c1} in the range $M\ges  N^{1/10}$}
	\label{sec:four}
	\bigskip
	
	This section has a few subsections, all contributing to a complete proof of Conjecture \ref{c1}  in  the range $M\ges N^{1/10}$.
	\bigskip

	We will use the notation for the density parameter 
	\begin{equation}
	\label{justK}
	K=\frac{2^{l/2}}{M^2}.\end{equation} 
	Recall our setup \eqref{e7}
	$$\frac{R^2}{K}\les |\{(r,r'):\;z_r-z_{r'}\in S_{Q,l}\}|,$$
	which forces 
	$$K\ges 1.$$
	
	We need to prove that $R\les N/M^4$. We may assume $K\ll \sqrt{R}$, or else $R\lesssim K^2=2^l/M^4\le N/M^4$, and we are done.
	
	\subsection{A brief outline of the argument}
	
	Here is an outline of our approach. We construct a graph $G$ with vertex set $(z_r:\;1\le r\le R)$,  by drawing an edge between $z_r$ and $z_{r'}$ if $z_r-z_{r'}\in S_{Q,l}.$
	By deleting edges if necessary, we may assume that
	\begin{equation}
		\label{djjfvji}
		E(G)\approx R^2/K.
	\end{equation}
	We focus attention on the \textit{popular} pairs $r,r'$, those that share at least $R/K^2$  neighbors $r''$.
	The equality
	\begin{equation}
		\label{keysum}
		z_r-z_{r'}=(z_r-z_{r''})+(z_{r''}-z_{r'})
	\end{equation}
	guarantees that $z_{r}-z_{r'}$ has many representations as sums of elements in $S_{Q,l}$. Each of these is associated with a unique triple $q,a,b$, say
	$$z_r-z_{r''}\in [\frac{b_1}{q_1}-\frac{1}{2^lQ},\frac{b_1}{q_1}+\frac{1}{2^lQ}]\times [\frac{a_1}{q_1}-\frac{1}{2^lQN},\frac{a_1}{q_1}+\frac{1}{2^lQN}]$$
	$$z_{r''}-z_{r'}\in [\frac{b_2}{q_2}-\frac{1}{2^lQ},\frac{b_2}{q_2}+\frac{1}{2^lQ}]\times [\frac{a_2}{q_2}-\frac{1}{2^lQN},\frac{a_2}{q_2}+\frac{1}{2^lQN}].$$
	We will indicate this by writing
	\begin{equation}
		\label{ucufijiodadkod[pkc]}
		(z_r-z_{r''})\blacksquare (a_1,b_1,q_1),\;\;(z_{r''}-z_{r'})\blacksquare(a_2,b_2,q_2).\end{equation}
	
	The key task is this: given $z_r,z_{r'}$, find a good upper bound on the number of such $z_{r''}$. This has the following reformulation, upon observing that the value of $b_1/q_1$ (or alternatively $b_2/q_2$) determines the value of $z_{r''}$ up to $N/Q2^l$ choices (due to the $1/N$-separation of the $x_{r''}$, and the fact that $x_{r''}$ determines $z_{r''}$).
	
	\begin{de}
		\label{enumbers}	
		We cover $[0,1]^2$ with pairwise disjoint boxes $B=I\times J\subset [0,1]^2$, with $|I|\sim 1/2^lQ$ and $|J|\sim 1/N2^lQ$. For each $B$, let  $n_B$ be the number of pairs $(b_1,q_1)$ with $|b_1|\le q_1-1\sim Q$ such that there are integers $a_1,a_2,b_2,q_2$ satisfying $|b_2|,|a_2|\le q_2-1\sim Q$, $|a_1|\le q_1-1$, $\gcd(a_1,q_1)=\gcd(a_2,q_2)=1$ and
		$$(\frac{b_1}{q_1}+\frac{b_2}{q_2},\frac{a_1}{q_1}+\frac{a_2}{q_2})\in B.$$
		If in this definition we also require that $\gcd(q_1,q_2)=1$, the number of pairs will be denoted by $n_B^*$. Note that $n_B^*\le n_B$.
		
	\end{de}
	
	We can easily close the argument if we find a popular pair $z_r,z_{r'}$, such that if $B$ is the box for which $z_r-z_{r'}\in B$, we have
	$$n_B\les Q.$$
	Indeed, as observed earlier, this forces \eqref{keysum} to have at most $\les Q\times \frac{N}{Q2^l}=\frac{N}{2^l}$ solutions $z_{r''}$. On the other hand, the pair being popular shows that there are at least $R/K^2$ solutions. Putting the two things together
	\begin{equation}
		\label{owoeieicieieci}
		\frac{R}{K^2}\les \frac{N}{2^l}\end{equation}
	leads to the desired upper bound $R\les N/M^4$.
	
	As we shall soon see, the intended bound $n_B\les Q$ does not hold for all boxes $B$, so the argument outlined in the previous paragraph will not suffice. But it will be crucial for the upper bound to hold in average, for the ``generic" box. The next result  shows that this is the case precisely when $M\ges N^{1/10}$. This justifies why our argument here can only address this regime.
	
	\begin{pr}
		\label{averagevalues}
		The sum over all ($\sim NQ^22^{2l})$ boxes satisfies
		$$\sum_{B} n_B\lesssim Q^6.$$
		Thus, when $M\ges N^{1/10}$,  the average value of $n_B$ satisfies the intended upper bound
		$$\frac{1}{\sharp B}\sum_Bn_B\les Q$$
		for all admissible $Q, l$ (satisfying \eqref{e5}).
		
		If $M\ggg N^{1/12}$, we have the lower bound
		$$\sum_{B} n_B^{*}\ges Q^6.$$
		Thus, when $N^{1/10}\ggg M\ggg N^{1/12}$ the average value of $n_B^*$ (and thus also that of $n_B$) satisfies the lower bound
		$$\frac{1}{\sharp B}\sum_Bn_B^{*}\ggg Q$$
		for some admissible $Q, l$ (satisfying \eqref{e5}).
	\end{pr}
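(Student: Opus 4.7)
The plan is to prove both bounds by counting sextuples $(b_1,q_1,a_1,b_2,q_2,a_2)$ with the appropriate coprimality conditions, and projecting them to triples $(B,b_1,q_1)$ via the map sending a sextuple to the unique box $B$ containing $(\tfrac{b_1}{q_1}+\tfrac{b_2}{q_2},\tfrac{a_1}{q_1}+\tfrac{a_2}{q_2})$. Writing $\sigma(B,b_1,q_1)$ for the size of the fiber over $(B,b_1,q_1)$, we get the identity $\sum_B n_B=\#\{(B,b_1,q_1):\sigma\ge 1\}\le \sum\sigma=\#\{\text{sextuples}\}$, with equality whenever $\sigma\in\{0,1\}$.

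The upper bound $\sum_B n_B\lesssim Q^6$ is then immediate from the trivial count $\#\{\text{sextuples}\}\lesssim Q^6$ (since $q_i\sim Q$ and $|b_i|,|a_i|\le q_i-1$ each allow $\lesssim Q$ choices). Dividing by $\sharp B\sim NQ^2 2^{2l}$, the averaged bound reduces to checking $Q^3\les N 2^{2l}$; using $Q\les N/M^4$ and $2^l\ges M^4$ from \eqref{e5}, the worst case $Q\sim N/M^4$, $2^l\sim M^4$ is precisely the threshold $M\ges N^{1/10}$.

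For the lower bound on $\sum_B n_B^*$, the strategy is to prove $\sigma(B,b_1,q_1)\le 1$ throughout the range $M\ggg N^{1/12}$, upgrading the inequality above to the equality $\sum_B n_B^*=\#\{\text{coprime sextuples}\}$. The latter is $\ges Q^6$ by elementary counting (using $\phi(q)\ges q$ for $q\sim Q$ and the positive density of pairs $q_1,q_2\sim Q$ with $\gcd(q_1,q_2)=1$). To establish $\sigma\le 1$, I would suppose two coprime witnesses $(a_1,b_2,q_2,a_2)$ and $(a_1',b_2',q_2',a_2')$ share the triple $(B,b_1,q_1)$. Clearing denominators in the $y$-coordinate condition $|\tfrac{a_1-a_1'}{q_1}+\tfrac{a_2}{q_2}-\tfrac{a_2'}{q_2'}|\lesssim 1/(NQ2^l)$ shows that the integer
$$A:=(a_1-a_1')q_2q_2'+q_1(a_2q_2'-a_2'q_2)$$
satisfies $|A|\lesssim Q^2/(N2^l)\les N/M^{12}$. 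In the range $M\ggg N^{1/12}$ this is $\lll 1$, forcing $A=0$. Reducing mod $q_1$ and using $\gcd(q_1,q_2q_2')=1$ yields $a_1=a_1'$; the residual $a_2q_2'=a_2'q_2$ combined with $\gcd(a_2,q_2)=\gcd(a_2',q_2')=1$ forces $(q_2,a_2)=(q_2',a_2')$ by standard divisibility; the $x$-coordinate condition then reads $|b_2-b_2'|\lesssim 1/2^l<1$, so $b_2=b_2'$.

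The decisive step is the implication $|A|\les N/M^{12}\lll 1\Rightarrow A=0$, which is exactly where the threshold $M\ggg N^{1/12}$ enters: for smaller $M$ the error bound is $\ges 1$ and genuine non-diagonal collisions of witnesses can occur, so any improvement would require an arithmetic count of near-solutions of $A=O(N/M^{12})$---precisely the type of input alluded to in Section \ref{sub9}.
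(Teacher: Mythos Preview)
Your approach is essentially the same as the paper's: both introduce the sextuple count $N_B$ (your $\sum\sigma$), use $n_B\le N_B$ for the upper bound, and for the lower bound show that under $M\ggg N^{1/12}$ the fiber over $(B,b_1,q_1)$ has size $O(1)$ by forcing the $t$-coordinate congruence to be an exact equality.

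One small imprecision: reducing $A=0$ mod $q_1$ only gives $a_1\equiv a_1'\pmod{q_1}$, i.e.\ $a_1-a_1'\in\{-q_1,0,q_1\}$ since $|a_i|\le q_1-1$; the ``residual'' equation is then $a_2q_2'-a_2'q_2=-kq_2q_2'$ with $k\in\{-1,0,1\}$, not literally $a_2q_2'=a_2'q_2$. The paper sidesteps this by reducing $A=0$ mod $q_2$ and mod $q_2'$ first (using $\gcd(q_1,q_2)=\gcd(a_2,q_2)=1$) to obtain $q_2=q_2'$ directly, after which the $\pm q_1$ ambiguity is harmless. Either way the fiber is $O(1)$ rather than exactly $1$, which is all that is needed for the $\ges Q^6$ lower bound.
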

	\begin{proof}
		Write
		$$N_B=|\{(a_1,a_2,b_1,b_2,q_1,q_2):\;q_i\sim Q,\;\gcd(a_i,q_i)=1,\;(\frac{b_1}{q_1}+\frac{b_2}{q_2},\frac{a_1}{q_1}+\frac{a_2}{q_2})\in B \}|$$
		$$N_B^*=|\{(a_1,a_2,b_1,b_2,q_1,q_2):\;q_i\sim Q,\;\gcd(a_i,q_i)=1, $$
		$$(\frac{b_1}{q_1}+\frac{b_2}{q_2},\frac{a_1}{q_1}+\frac{a_2}{q_2})\in B\text{ and }\gcd(q_1,q_2)=1\}|.$$
		It is immediate that  $\sum_B N_B\sim Q^6$ and that $n_B\le N_B,$ leading to
		$$\sum_{B} n_B\lesssim Q^6.$$
		Our assumption  $2^l\ges M^4\ges N^{2/5}$ implies that $Q^3\lesssim N^3/2^{3l}\les N2^{2l}$, and thus $Q^6/NQ^22^{2l}\les Q$.

		We next prove that $n_B^*\gtrsim N_{B}^*$ when $M\ggg N^{1/12}$. We will in fact prove that if
		$$(\frac{b_1}{q_1}+\frac{b_2}{q_2},\frac{a_1}{q_1}+\frac{a_2}{q_2}),\;(\frac{b_1}{q_1}+\frac{b_2'}{q_2'},\frac{a_1'}{q_1}+\frac{a_2'}{q_2'})\in B$$
		with $\gcd(q_1,q_2)=\gcd(q_1,q_2')=1$,
		then $(a_1,a_2,b_2,q_2)=(a_1'\pm q_1,a_2'\pm q_2,b_2',q_2')$. In other words, given $B$, the pair $(b_1,q_1)$ determines the remaining entries $a_1,a_2,b_2,q_2$ up to $O(1)$ many choices.
		
		We first use that $Q^3\ll NQ2^l$ when $M\ggg N^{1/12}$. Thus,
		$$\frac{a_1}{q_1}+\frac{a_2}{q_2}=\frac{a_1'}{q_1}+\frac{a_2'}{q_2'}+O(1/NQ2^l)$$
		forces
		$$\frac{a_1}{q_1}+\frac{a_2}{q_2}=\frac{a_1'}{q_1}+\frac{a_2'}{q_2'},$$
		or
		$$(a_1'-a_1)q_2q_2'=q_1(a_2q_2'-a_2'q_2).$$
		Thus $q_2|q_1q_2'$ and $q_2'|q_1q_2$, and since $\gcd(q_1,q_2)=\gcd(q_1,q_2')=1$, it follows that $q_2=q_2'$.
		
		Second, since $\frac{b_2}{q_2}-\frac{b_2'}{q_2'}=O(\frac1{Q2^l})$, it follows that $b_2=b_2'$.
		
		Third,  $\frac{a_1}{q_1}+\frac{a_2}{q_2}=\frac{a_1'}{q_1}+\frac{a_2'}{q_2}$ together with $|a_1-a_1'|< 2q_1,|a_2-a_2'|<2q_2$ and $\gcd(q_1,q_2)=1$ forces $a_1=a_1'\pm q_1$, $a_2=a_2'\pm q_2$.
		
		It is easy to see that (for example, assume $q_1,q_2$ are both primes)
		$$\sum_B N_B^{*}\ges Q^6,$$
		which concludes the lower bound
		$$\sum_B n_B^*\ges Q^6.$$

		When $M\lll N^{1/10}$ we may find admissible $l,Q$  (satisfying \eqref{e5}) such that $Q^6/NQ^22^{2l}\ggg Q$.

	\end{proof}
	
	This proposition shows that the average value of $n_B^*$ is $\approx Q^4/N2^{2l}$. In the next subsection we will see that for each $B$ we have $n_B^*\les Q^4/N2^{2l}$, see \eqref{sharppppp}.
	\smallskip

	Let us now prove the existence of ``enemies", or boxes $B$ for which $n_B$ is much larger than $Q$. In fact we show more. We may define $\tilde{n_B}$ by requiring an extra condition in the definition of $n_B$. Namely, we may insist that the fractions $b_1/q_1$ generated by the pairs $(b_1,q_1)$ counted by $n_B$ are separated by at least $1/N$. Obviously $\tilde{n_B}\le n_B$. The earlier argument would still work if we had the bound $\tilde{n_B}\les Q$ for the box containing the difference $z_r-z_{r'}$ of one particular popular pair. The next example shows that even $\tilde{n_B}$ can sometimes be much larger than $Q$.
	
	\begin{ex}[Enemies]
		\label{exenemies}
		Fix $1\le q\le Q$, $1\le r\le Q/q$ with $rq\ge Q^2/N$.  Let $t=\frac{a}{q}$ with $0\le a\le q-1$ (no requirement of the form $(a,q)=1$ is needed). Let also $x=\frac{b}{rq}$, the value of $0\le b\le q-1$ is  arbitrary.
		
		We will show that there are solutions to
		$$t=\frac{a_1}{q_1}+\frac{a_2}{q_2}$$
		$$x=\frac{b_1}{q_1}+\frac{b_2}{q_2},$$
		with $q_1,q_2\sim Q$, $\gcd(a_1,q_1)=\gcd(a_2,q_2)=1$,
		with the points $b_1/q_1$ being $1/N$-separated and with at least $\ges Q^2/rq$ many such pairs $(b_1,q_1)$. This shows that if $B$ is the box in which the pair $(x,t)$ lies, then $\tilde{n_B}\ges Q^2/rq$. This lower bound can be enforced to be much larger than $Q$.
		\smallskip

		Indeed, note that for each $n=rm\sim Q/q$ we may take $q_1=q_2=nq\sim Q$ and pick $a_1$, $a_2=na-a_1$ relatively prime to $nq$ and such that
		$$\frac{a}q=\frac{a_1}{nq}+\frac{na-a_1}{nq}.$$
		Then note that
		$$\frac{b}{rq}=\frac{b_1}{nq}+\frac{mb-b_1}{nq}$$
		holds for all $b_1\le nq-1$. It remains to see that the distinct fractions
		$$\frac{b_1}{nq}:\;\gcd(b_1,nq)=1,\;n=rm\sim Q/q$$
		are at least as many as $\ges Q^2/rq$,   and are $1/N$-separated. The first claim follows since Euler's totient function satisfies $\phi(m)\ges m$. For the second claim, note that if $\frac{b_1}{nq}$ and $\frac{b_1'}{n'q}$ are distinct, their difference is at least
		$$\frac{1}{mm'rq}\sim \frac{qr}{Q^2}.$$
		This is greater than $1/N$ by hypothesis. See Remark \ref{enemiesexplained} for the sharpness of this estimate.

	\end{ex}	
	
	In Section \ref{s5} we introduce parameters $D,P,F$ that will allow us to find sensible upper bounds for (a refined version of) $n_B$ in Section \ref{s6}. In a certain range of these parameters we will have the favorable bound $n_B\les Q$, and the earlier argument based on \eqref{owoeieicieieci} will work. In the remaining range of parameters where such a bound is not available, we will use pigeonholing to isolate a certain piece in the graph, that we call {\em fork}. This fork is proved to have additional arithmetic structure, that will be amenable to sharp  counting arguments.

	\subsection{Pigeonholing and graph considerations}
	\label{s5}
	
	The rest of the analysis is driven by the size of the greatest common divisor of $q_1,q_2$ denoted by  $\gcd(q_1,q_2)$.
	Let $D$ be a dyadic number $1\le D\le Q$.
	{Pigeonholing}, we may assume that a big chunk (by that we mean the total number  divided by $\log N$) of the popular pairs have a big chunk of their joint neighbors $z_{r''}$ satisfy \eqref{ucufijiodadkod[pkc]} with their greatest common divisor satisfying $\gcd(q_1,q_2)\sim D$. There are two other parameters $P,F$ that will also be pigeonholed.

	\bigskip
	
	\begin{lem}
		\label{primenessofp}	
		Assume $\gcd(q_1,q_2)=d$, so $q_1=dm_1$, $q_2=dm_2$, $\gcd(m_1,m_2)=1$. Assume also that $\gcd(a_1,q_1)=\gcd(a_2,q_2)=1$. Let $$p=\gcd(\frac{a_1q_2+a_2q_1}{d},\frac{q_1q_2}{d}).$$
		Then $p|d$, so in particular $p\le d$. Also, $\gcd(p,m_1)=\gcd(p,m_2)=1$.
	\end{lem}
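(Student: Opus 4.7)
The plan is to unpack the definitions to make the gcd's concrete and then argue prime-by-prime.

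First I would simplify the expressions inside the gcd. Since $q_1=dm_1$ and $q_2=dm_2$, we have
$$\frac{a_1q_2+a_2q_1}{d}=a_1m_2+a_2m_1,\qquad \frac{q_1q_2}{d}=dm_1m_2,$$
so the claim becomes: if $p=\gcd(a_1m_2+a_2m_1,\,dm_1m_2)$, then $p\mid d$, $\gcd(p,m_1)=1$, and $\gcd(p,m_2)=1$. Note that the assumptions $\gcd(a_i,q_i)=1$ unpack to $\gcd(a_i,d)=\gcd(a_i,m_i)=1$ for $i=1,2$, and we also have $\gcd(m_1,m_2)=1$ by definition of $d=\gcd(q_1,q_2)$.

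Next I would prove the coprimality statements $\gcd(p,m_1)=\gcd(p,m_2)=1$ by contradiction at the level of a prime. Suppose $\ell$ is a prime with $\ell\mid p$ and $\ell\mid m_1$. Then $\ell$ divides $a_1m_2+a_2m_1$, and since $\ell\mid m_1$ it must divide $a_1m_2$. But $\gcd(m_1,m_2)=1$ forces $\ell\nmid m_2$, hence $\ell\mid a_1$; this contradicts $\gcd(a_1,q_1)=1$ since $\ell\mid m_1\mid q_1$. The argument for $m_2$ is symmetric.

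Finally, I would deduce $p\mid d$. We know $p\mid dm_1m_2$, and the previous step gives $\gcd(p,m_1m_2)=1$, so standard divisibility (Euclid's lemma, applied prime by prime) yields $p\mid d$, which immediately gives $p\le d$ as well. There is no real obstacle here: the only thing to be careful about is that the assumption $\gcd(a_i,q_i)=1$ must be used in its full strength (i.e.\ coprimality with the factor $m_i$, not just with $d$), but this is exactly what makes the contradiction go through.
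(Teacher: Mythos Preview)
Your proof is correct and essentially identical to the paper's: both simplify the two quantities to $a_1m_2+a_2m_1$ and $dm_1m_2$, argue prime-by-prime that no prime divisor of $p$ can divide $m_1$ (or $m_2$) by using $\gcd(a_i,m_i)=1$ and $\gcd(m_1,m_2)=1$, and then deduce $p\mid d$ from $p\mid dm_1m_2$ together with $\gcd(p,m_1m_2)=1$. The only cosmetic difference is the order in which the coprimality facts are invoked in the contradiction.
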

	\begin{proof}
		Any prime factor $p_1$ of $p$ must divide $dm_1m_2$ and $m_2a_1+m_1a_2$. Assume for contradiction that it divides $m_1$. Then it must also divide $m_2a_1$. But since $(a_1,m_1)=1$, we are left with $p_1|m_2$, contradicting that $(m_1,m_2)=1$.
		
	\end{proof}
	
	\bigskip
	
	The relevance of $p$ is that it determines the denominator $q$ in the least terms representation
	$$\frac{a_1}{q_1}+\frac{a_2}{q_2}=\frac{A}{q},\;\;q=\frac{d}{p}m_1m_2,\;\gcd(A,q)=1.$$
	
	Given $b_1,b_2$ with $|b_1|\le q_1-1$, $|b_2|\le q_2-1$, we will also consider the parameter $f$ given by
	\begin{equation}
		\label{iojurifuriurugitug}
		f=\gcd(\frac{b_1q_2+b_2q_1}{d},p).\end{equation}
	Note that we may write
	$$\frac{b_1}{q_1}+\frac{b_2}{q_2}=\frac{B}{\frac{p}{f}q},\;\;q=\frac{d}{p}m_1m_2.$$
	While $\gcd(B,p/f)=1$, $B$ and $q$ are allowed to share factors. $B$ might be equal to zero, in which case $f=p$. The parameter $f$ will play a critical role in the counting arguments in Subsections \ref{sub3} and \ref{sub7}. See Remark \ref{roleoffexplained} for the subtlety behind our choice of $f$. See also Remark \ref{roleoff} for the key difference between $p$ and $f$.
	
	\bigskip
	
	Let us get to the details of the argument. Recall the graph $G$ introduced earlier. We will call its vertices $z_r$ with the letter $v$.
	
	For two vertices $v_1\not=v_2$ in $G$, write $\Nc(v_1,v_2)$ for the common neighbors. Define the popular pairs
	$$\Pc=\{(v_1,v_2):\;|\Nc(v_1,v_2)|\gtrsim R/K^2\}.$$

	We rely on the following easy result.
	\begin{lem}
		For each graph $G$ with $R$ vertices $V(G)$  and   $R^2/K$ edges $E(G)$, $K\ll \sqrt{R}$,  we have
		\begin{equation}
			\label{forgitgii609i}
			\sum_{(v_1,v_2)\in\Pc}|\Nc(v_1,v_2)|\gtrsim R^3/K^2.
		\end{equation}
	\end{lem}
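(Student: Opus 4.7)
The plan is to prove this by a standard paths-of-length-two / Cauchy--Schwarz argument, followed by removing the contribution of non-popular pairs.

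First, I would rewrite the sum over ordered pairs $(v_1,v_2)$ with $v_1\ne v_2$ in terms of degrees, using the identity
$$\sum_{v_1\ne v_2}|\Nc(v_1,v_2)|=\sum_{v}\bigl(\deg(v)^2-\deg(v)\bigr)=\sum_v\deg(v)^2 - 2E(G),$$
since choosing a common neighbor $v$ of $v_1,v_2$ is the same as choosing an ordered pair of distinct neighbors $(v_1,v_2)$ of $v$. Then Cauchy--Schwarz gives
$$\sum_v\deg(v)^2\ge \frac{(\sum_v\deg(v))^2}{R}=\frac{(2E(G))^2}{R}=\frac{4R^3}{K^2}.$$
Since the hypothesis $K\ll\sqrt R$ is equivalent to $R^2/K\ll R^3/K^2$, the subtracted term $2E(G)=2R^2/K$ is negligible, so
$$\sum_{v_1\ne v_2}|\Nc(v_1,v_2)|\gtrsim \frac{R^3}{K^2}.$$

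Next I would separate the contribution of non-popular pairs. Fixing the implicit constant $c>0$ in the definition of $\Pc$ to be sufficiently small, the trivial bound
$$\sum_{(v_1,v_2)\notin\Pc}|\Nc(v_1,v_2)|\le R^2\cdot \frac{cR}{K^2}=\frac{cR^3}{K^2}$$
shows this part can be absorbed into at most half of the lower bound obtained in the previous paragraph, leaving
$$\sum_{(v_1,v_2)\in\Pc}|\Nc(v_1,v_2)|\gtrsim \frac{R^3}{K^2},$$
as desired.

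The proof is essentially routine; there is no real obstacle, apart from being mindful that the hypothesis $K\ll\sqrt{R}$ is used precisely to ensure that the $\sum_v\deg(v)$ correction term from the identity above is dominated by the Cauchy--Schwarz lower bound, and that the implicit constant defining $\Pc$ must be chosen small enough (relative to the Cauchy--Schwarz constant) for the popular pairs to carry a positive fraction of the total.
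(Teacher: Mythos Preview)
Your proof is correct and is essentially the same argument as the paper's: the paper phrases the Cauchy--Schwarz step as H\"older for $\|\sum_v 1_{\Nc_v}\|_2^2$, but since $\sum_v 1_{\Nc_v}(v')=\deg(v')$ this is exactly your inequality $\sum_v\deg(v)^2\ge(\sum_v\deg(v))^2/R$, and the removal of non-popular pairs is identical. The only cosmetic difference is notation.
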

	\begin{proof}
		Writing $\Nc_v$ for the neighbors of $v$, we note that	
		$$\|\sum_{v\in V(G)} 1_{\Nc_v}\|_1=2E(G)\ge R^2/K.$$
		Using H\"older we get
		$$\|\sum_{v\in V(G)} 1_{\Nc_v}\|_2^2\ge \frac1R\|\sum_{v\in V(G)} 1_{\Nc_v}\|_1^2\ge \frac{R^3}{K^2}.$$
		On the other hand
		$$\|\sum_{v\in V(G)} 1_{\Nc_v}\|_2^2=2E(G)+2\sum_{v_1\not=v_2}|\Nc(v_1,v_2)|<\frac{R^3}{2K^2}+2\sum_{v_1\not=v_2}|\Nc(v_1,v_2)|.$$
		We conclude that
		\begin{equation}
			\label{e15'}
			\sum_{v_1\not=v_2}|\Nc(v_1,v_2)|>\frac{R^3}{2K^2}.
		\end{equation}
		Let
		$$\Vc=\{(v_1,v_2):\;v_1\not=v_2,\;|\Nc(v_1,v_2)|< \frac{R}{4K^2}\}.$$
		Since
		\begin{equation}
			\label{e14}
			\sum_{(v_1,v_2)\in\Vc}|\Nc(v_1,v_2)|<\frac{R}{4K^2}R^2,\end{equation}
		the conclusion follows by combining \eqref{e15'} and \eqref{e14}.
		
	\end{proof}

	\bigskip
	
	For dyadic $1\le F\le P\le D\le Q$, let
	$$\Nc_{D,P,F}(v_1,v_2)=\{v_3\in\Nc(v_1,v_2):\;$$$$(v_1-v_3)\blacksquare (a_1,b_1,q_1),\;(v_3-v_2)\blacksquare (a_2,b_2,q_2),\text{ such that }$$$$\gcd(q_1,q_2):=d\sim D\text{ and }
	$$$$
	\gcd(\frac{a_1q_2+a_2q_1}{d},\frac{q_1q_2}{d}):=p\sim P\text{ and}$$$$\gcd(\frac{b_1q_2+b_2q_1}{d},p):=f\sim F\}.$$
	Note the partition
	$$\Nc(v_1,v_2)=\bigcup_{D,P,F}\Nc_{D,P,F}(v_1,v_2).$$
	Call
	$$\Pc_{D,P,F}=\{(v_1,v_2)\in\Pc:$$$$\;|\Nc_{D,P,F}(v_1,v_2)|=\max_{D',P',F'}|\Nc_{D',P',F'}(v_1,v_2)|,\;\text{and }(D,P,F)\text{ is  smallest lexicographically}\}.$$
	Note that the sets $\Pc_{D,P,F}$ partition $\Pc$.
	Note that if $(v_1,v_2)\in \Pc_{D,P,F}$ then
	$$|\Nc_{D,P,F}(v_1,v_2)|\ges |\Nc(v_1,v_2)|.$$
	It follows that
	\begin{align*}
		\sum_{D,P,F}\sum_{(v_1,v_2)\in\Pc_{D,P,F}}\Nc_{D,P,F}(v_1,v_2)&\ges \sum_{D,P,F}\sum_{(v_1,v_2)\in\Pc_{D,P,F}}|\Nc(v_1,v_2)|\\&=\sum_{(v_1,v_2)\in\Pc}|\Nc(v_1,v_2)|\ges R^3/K^2.
	\end{align*}
	Pigeonhole  $D,P,F$ such that
	\begin{equation}
		\label{fiu3u9fiewld;.}
		\sum_{(v_1,v_2)\in\Pc_{D,P,F}}\Nc_{D,P,F}(v_1,v_2)\ges R^3/K^2.
	\end{equation}
	Let us discuss three  examples with $K\sim 1$.
	\begin{ex}\label{exemplu1}
		In this example, the vertices $(x_r,t_r)$ of $G$ are taken to be $(a/q,a/q)$ with $|a|\le q-1$, for some fixed prime $q\sim Q$. We draw an edge between $(a_1/q,a_1/q)$ and $(a_2/q,a_2/q)$ if $\gcd(a_2-a_1,q)=1$. There are $R\sim Q$ vertices and $\sim  R^2$ edges. The parameters $D,P,F$ satisfying \eqref{fiu3u9fiewld;.} with $K\sim 1$ are  equal to $Q$, $1$ and $1$, respectively. Note that $R\lesssim N/M^4$. If $Q\sim N/2^l$ and $2^l\sim M^4$, we do in fact get a tight example where $R\sim N/M^4$.
	\end{ex}
	\smallskip
	
	\begin{ex}
		\label{interexam}	
		In this example, the vertices $(x_r,t_r)$ of $G$ are taken to be $(c/r,c/r)$ with $r$ ranging over all primes $\sim\sqrt{Q}$ and  $1\le |c|\le r-1$. We draw an edge between $(c_1/r_1,c_1/r_1)$ and $(c_2/r_2,c_2/r_2)$ if $r_1\not=r_2$. There are $R\approx Q$ vertices and $\approx R^2$ edges. The parameters $D,P,F$ satisfying \eqref{fiu3u9fiewld;.} with $K=1$ are all equal to $\sqrt{Q}$. See Example \ref{exsquareroot}. If $Q\sim N/2^l$ and $2^l\sim M^4$, we  get again a tight example,  $R\approx N/M^4$.
		\smallskip
		
		For each $\sqrt{Q}\ll D\le Q$, this example may be modified to produce a graph represented by this $D$. Indeed, fix a prime $1\ll d\le Q$, and consider vertices of the form $(\frac{b}{dq},\frac{1}{dq})$, with $q$ ranging over all primes $\sim \sqrt{Q/d}$, and with $\gcd(b,dq)=1$. This latter condition is meant to ensure the $1/N$-separation of the $x$-entries $b/dq$. Draw an edge between $(\frac{b}{dq},\frac{1}{dq})$ and $(\frac{b'}{dq'},\frac{1}{dq'})$ if $\gcd(q-q',d)=1$.
		There are $R\approx Q/D$ vertices and $\sim R^2$ edges. It can be easily checked that \eqref{fiu3u9fiewld;.} is satisfied with $K\sim 1$, $D\sim \sqrt{Qd}$ and $P=F\sim \sqrt{Q/d}$. However, this example is not tight.
		
	\end{ex}
	The following variant of the previous example leads to a construction that satisfies \eqref{fiu3u9fiewld;.} with two different choices for the pair $(P,D)$.
	\begin{ex}
		\label{exemplu3}	
		Fix dyadic numbers $ 1\le Q_1,Q_2\le Q$ with $Q_1Q_2=Q$ and $Q_1\not= Q_2$. For $i\in\{1,2\}$ let 	
		$V_i$ consist of points $v_i$ of the form $(\frac{b_i}{r_i},\frac{c_i}{r_i})$ with $r_i\sim Q_i$ prime, and  $1\le |b_i|, |c_i|\le r_i-1$.  Thus, $1\le |V_i|\lesssim Q_i^3$.
		
		Note that if $v_i\in V_i$ then $v_1-v_2$ is of the form $(\frac{b}{r_1r_2},\frac{a}{r_1r_2})$ with $\gcd(a,r_1r_2)=1$. Since $r_1r_2\sim Q$, we may draw an edge between each $v_1$ and each $v_2$. The bipartite graph has $R=|V_1|+|V_2|$ vertices and $|V_1||V_2|$ edges. Choosing $|V_1|\sim |V_2|$, we may ensure maximum edge density, $K=1$.
		
		Looking at this graph in two ways, \eqref{fiu3u9fiewld;.} is satisfied both with $P=D=Q_1$ and $P=D=Q_2$.  	
	\end{ex}
	
	\begin{re}
		\label{random}	
		We do not have an example of a  nontrivial  graph (that is, having $\ggg 1$ vertices) satisfying \eqref{fiu3u9fiewld;.} with $D\les 1$ and  $K\sim 1$. We find it likely that such a graph does not exist.
		To wit, in Example \ref{exemplu3} the requirement $D\les 1$ forces $Q_2\les 1$, if, say,  $Q_2\le Q_1$. Thus $|V_2|\les 1$. In order to keep the edge density $\approx 1$, this forces $|V_1|\les 1$, leading to a trivial graph.
	\end{re}
	It may be helpful to contrast these example with the random graph.
	\smallskip
	
	\begin{ex}[The random graph]
		  We select $R$ points $t_1,\ldots,t_R $ in $[0,1]$ independently with uniform probability. The vertex set of $G$ will consist of the points $v_r=(t_r,t_r)$, $1\le r\le R$.
		 
		 We draw an edge between $v_i$ and $v_j$ if $|t_i-t_j-a/q|\lesssim 1/NQ2^l$ for some $q\sim Q$ and $\gcd(a,q)=1$. The expected number of edges is $\sim R^2\frac{Q}{N2^l}$. Thus, the generic edge density of  this graph is $\sim \frac{Q}{N2^l}$. Note however that $\frac{Q}{N2^l}\lll 2^{-l/2}\le \frac{M^2}{2^{l/2}}$, unless $Q\approx N$. 
		  This shows that the random graph is far from achieving \eqref{djjfvji} in the nontrivial case when $2^l\ggg 1$. In other words, the random graph is a non-example. 
		\end{ex}
		  The graphs in Examples \ref{exemplu1}-\ref{exemplu3} perform   much better than the random graph by using arithmetic structures. There appears to be no analogue for these examples when $D\les 1$. Proving this would probably lead to further progress on Conjecture \ref{c1}.

	The parameters $D,P,F$ will be fixed throughout the rest of the argument.

	\subsection{Counting solutions }
	\label{s6}
	
	Recall that $1\le F\le P\le  D\le Q$.

	There are no restrictions on the size of the parameter $M\ge 1$ in this subsection. We will make the very mild assumption that $Q\ll N$, mostly for convenience.  
	
	The ultimate goal here is to prove Proposition \ref{p134}. This will be achieved in two stages, Lemma \ref{l7} and  Lemma \ref{l8}.
	
	\begin{de}
		Let $x,t$ be  arbitrary numbers.
		
		We will call a pair $(q_1,q_2)$ with $\gcd(q_1,q_2):=d\sim D$ admissible (for $x,t$, with respect to parameters $D,P,F$) if $q_1,q_2\sim Q$ and there are $a_j$ with  $|a_j|<q_j$ and $\gcd(a_j,q_j)=1$, and there are $b_j$ with $|b_j|<q_j$ for each $j\in\{1,2\}$ such that
		\begin{equation}
			\label{e15}
			|\frac{a_1}{q_1}+\frac{a_2}{q_2}-t|\lesssim  \frac1{2^lNQ},
		\end{equation}
		\begin{equation}
			\label{e15despreb}
			|\frac{b_1}{q_1}+\frac{b_2}{q_2}-x|\lesssim  \frac1{2^lQ},
		\end{equation}
		and
		\begin{equation}
			\label{e15desprep}
			p:=\gcd(\frac{a_1q_2+a_2q_1}{d},\frac{q_1q_2}{d})\sim P,
		\end{equation}
		\begin{equation}
			\label{e15despref}
			f:=\gcd(\frac{b_1q_2+b_2q_1}{d},p)\sim F.
		\end{equation}
	\end{de}

	\begin{re}
		\label{invariant}	
		An admissible pair $(q_1,q_2)$ may satisfy \eqref{e15}-\eqref{e15despref} with different values of $a_1,a_2,b_1,b_2$. The pair $(q_1,q_2)$ uniquely determines  the parameter $p$ (in addition to $d$), via \eqref{e15}. Indeed, assume
		$$|\frac{a_1'}{q_1}+\frac{a_2'}{q_2}-t|\lesssim   \frac1{2^lNQ}.$$
		Write $q_1=dm_1$, $q_2=dm_2$. Since we assume $Q\ll N$, it follows that
		$$|{(a_1-a_1')m_2+(a_2-a_2')m_1}|\lesssim \frac{Q^2}{D}|\frac{(a_1-a_1')m_2+(a_2-a_2')m_1}{dm_1m_2}|\lesssim \frac{Q^2}{NDQ2^l}\ll 1.$$
		This forces $a_1q_2+a_2q_1=a_1'q_2+a_2'q_1$, showing that $p$ is an invariant.
		\smallskip
		
		Since $f|p$, there are only $\les 1$ choices for $f$, so $f$ is essentially an invariant. It does become a genuine invariant if $D\gg Q/2^l$ (repeat the above argument for $b_i$), but this will not play any role in our argument.

	\end{re}
	
	\begin{lem}
		\label{l7}
		
		Each $x,t$ has
		\begin{equation}
			\label{admissible}
			\les \min \{P,F+\frac{QP}{DF2^l}\}+\frac{Q^3}{2^lND^2P}\end{equation} admissible pairs.
	\end{lem}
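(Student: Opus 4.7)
The plan is to establish two separate upper bounds $N_1$ and $N_2$ on the admissible count, with $N_1 \les P + \frac{Q^3}{2^l N D^2 P}$ and $N_2 \les F + \frac{QP}{DF 2^l} + \frac{Q^3}{2^l N D^2 P}$; the lemma then follows by taking their minimum. The $P \ge 1$ trivially upgrades the powers of $P$ in denominators where needed.

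For $N_1$: by Remark \ref{invariant}, each admissible pair $(q_1,q_2)$ determines a unique reduced fraction $A/q$ with $q=(d/p)m_1m_2 \sim Q^2/(DP)$ and $|A/q-t|\les 1/(2^l N Q)$. The standard Farey-type spacing (reduced fractions with denominator $\sim Q_0$ are $\gtrsim 1/Q_0^2$-separated) shows there are $\les 1 + Q^3/((DP)^2 2^l N)$ such fractions in the window. For each reduced fraction, the pair $(q_1,q_2)$ is recovered from at most $\les 1$ coprime factorizations of $m_1m_2 = pq/d$. Using $P\ge 1$ this yields the bound $N_1 \les P + \frac{Q^3}{2^l N D^2 P}$, as claimed.

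For $N_2$ we additionally exploit the $b$-constraint. For each admissible pair, fix some valid witness $(b_1,b_2)$, so that $B = (b_1 m_2 + b_2 m_1)/f$ is an integer coprime to $p/f$ with $|fB - xpq|\les Q/(D 2^l)$. Set $q_0 = q/\gcd(B,q)$, so that the reduced form of the $b$-sum is $B^*/q^*$ with $q^* = (p/f) q_0$. Since $q_0\mid q$ and $\tau(q)\les 1$, the value of $q_0$ lies in $\les 1$ dyadic classes, so we may upper bound $N_2$ by the sum over dyadic $q_0\in [1, Q^2/(DP)]$ of the number of admissible pairs for which this $q_0$ occurs. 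For each $q_0$: (i) the reduced fractions $B^*/q^*$ with denominator $q^*=(p/f)q_0$ in the window $|{\cdot}-x|\les 1/(2^l Q)$ number $\les 1 + Pq_0/(F 2^l Q)$ (using the $1/q^*$-spacing of reduced fractions with fixed denominator); (ii) for each such reduced fraction, a compatible admissible pair has $q_0\mid q$ and a reduced $A/q$ near $t$, and Farey restricted to multiples of $q_0$ gives at most $\les 1 + Q^3/((DP)^2 q_0 2^l N)$ such pairs.

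Multiplying (i) and (ii) and summing over dyadic $q_0$ yields four terms: the diagonal $\sum Pq_0/(F 2^l Q)$ contributes $\les Q/(DF 2^l)$, the $\sum Q^3/((DP)^2 q_0 2^l N)$ contributes $\les Q^3/(D^2 P 2^l N)$ (using $P\ge 1$), and the constant and mixed terms are absorbed into these or into the constant $1$. This gives $N_2 \les 1 + Q/(DF 2^l) + Q^3/(D^2 P 2^l N) \le F + QP/(DF 2^l) + Q^3/(D^2 P 2^l N)$, completing the bound. The main technical point—and the place where I expect the most care—is verifying the overcounting claim for $q_0$-classes and handling the edge cases when either window size drops below $1$ (so that the ``$+1$'' terms dominate the geometric count).
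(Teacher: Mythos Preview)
Your approach via Farey counting is different from the paper's (which fixes a reference admissible pair, clears denominators to produce a small integer error $X$, and splits on $X=0$ versus $X\neq 0$), and the Farey idea is natural here. But there is a genuine gap common to both your $N_1$ and $N_2$ arguments: the map from admissible pairs $(q_1,q_2)$ to the reduced fraction $A/q$ (or to the pair $(A/q,B^*/q^*)$) is \emph{not} $\les 1$-to-1. From $q=(d/p)m_1m_2$ one recovers $d/p$, $m_1$, $m_2$ up to $\les 1$ choices by the divisor bound, but $p$ (hence $d$, hence $(q_1,q_2)=(dm_1,dm_2)$) is only pinned down up to $\sim P$ choices. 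In your $N_1$ argument this omission is harmless because you then throw in the slack ``$P\ge 1$'' at the end and land on the right bound anyway. In your $N_2$ argument it is fatal.

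Concretely, Example~\ref{exsquareroot} (with $D\sim P\sim F\sim\sqrt{Q}$) furnishes $\approx\sqrt{Q}$ admissible pairs, all of which share the \emph{same} reduced fractions: $A/q=t$ and $B^*/q^*=x$, with $q=q^*=r_1r_2$ and hence $q_0=q$. Your step~(i) correctly gives $\les 1$ fraction $B^*/q^*$; your step~(ii) correctly gives $\les 1$ fraction $A/q$ at this $q_0\sim Q$. But you then conclude $\les 1$ admissible \emph{pairs}, whereas there are $\approx\sqrt{Q}$ of them (one for each prime $r_3\sim\sqrt{Q}$ playing the role of $d=p=f$). Your intermediate claim $N_2\les 1+Q/(DF2^l)+Q^3/(D^2P^22^lN)$ is therefore strictly stronger than the lemma and is false. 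The missing multiplicity here is exactly $\sim F$: once both $A/q$ and $B^*/q^*$ are known, $d/p$ and $p/f$ are determined (the latter via $q^*/q_0$), leaving $\sim F$ choices for $f$ and hence for $(q_1,q_2)$. If you insert this factor $F$ into your (ii), the corrected product summed over dyadic $q_0$ gives $\les F+Q/(D2^l)+FQ^3/(D^2P^22^lN)$, which is indeed $\le F+QP/(DF2^l)+Q^3/(D^2P2^lN)$. So the route is salvageable, but as written the argument is incomplete. You would also need to handle the degenerate cases $A=0$ or $B^*=0$ (where your Farey count is vacuous); the paper treats these as Cases~2--4, where they force $q_1=q_2$ and $P\sim D\sim Q$.
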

	Before proving this result, let us test its sharpness with two examples.
	\begin{ex}
		When $D=1$, it follows that $P=F=1$, so the upper bound  is $1+Q^3/2^lN$. The proof of \eqref{admissible} is fairly easy in this case, as neither the parameters $p,f$ nor \eqref{e15despreb} play any role. The pair $(q_1,q_2)$ determines $a_1,a_2$ uniquely via \eqref{e15}. Thus, instead of counting pairs $(q_1,q_2)$, we may count four-tuples $(q_1,q_2,a_1,a_2)$ satisfying \eqref{e15}. An argument similar to the one in Proposition \ref{averagevalues} shows that there are (in average) $\sim Q^4/2^lNQ$ solutions for the typical $x,t$. Lemma \ref{l7} confirms that, in the interesting case when $Q\ggg \sqrt{N}$,  this average value is in fact an upper bound for each $x,t$.
	\end{ex}
	
	Next, we analyze an interesting example for the case $D,P,F\sim \sqrt{Q}$.
	\begin{ex}
		\label{exsquareroot}	
		Let $t=\frac{c_2}{r_2}-\frac{c_1}{r_1}$, where $r_1,r_2$ are distinct primes $\sim \sqrt{Q}$ and $1\le c_1\le r_1-1$, $1\le c_2\le r_2-1$. Let also $x=\frac{d_2}{r_2}-\frac{d_1}{r_1}$ where $|d_1|\le r_1-1$, $|d_2|\le r_2-1$.
		
		Pick any prime $r_3\sim \sqrt{Q}$ different from $r_1,r_2$. We prove that $q_1=r_1r_3$, $q_2=r_2r_3$ is an admissible pair for $x,t$ with respect to parameters $D=P=F=\sqrt{Q}$. Indeed, pick any $1\le |c_3|,|d_3|\le r_3-1$. Define $a_1,a_2,b_1,b_2$ via $$\frac{a_1}{q_1}=\frac{c_3}{r_3}-\frac{c_1}{r_1},$$
		$$\frac{a_2}{q_2}=\frac{c_2}{r_2}-\frac{c_3}{r_3},$$
		$$\frac{b_1}{q_1}=\frac{d_3}{r_3}-\frac{d_1}{r_1},$$
		$$\frac{b_2}{q_2}=\frac{d_2}{r_2}-\frac{d_3}{r_3}.$$
		It is easy to see that $\gcd(a_1,q_1)=\gcd(a_2,q_2)=1$, $\frac{a_1}{q_1}+\frac{a_2}{q_2}=t$, $\frac{b_1}{q_1}+\frac{b_2}{q_2}=x$ and that $d=p=f=q_3$. The upper bound in \eqref{admissible} becomes $\les \sqrt{Q}+\frac{Q^{3/2}}{N2^{l}}\lesssim \sqrt{Q}$. On the other hand, the computations above  show that there are at least $\sqrt{Q}/\log Q$ admissible pairs.
	\end{ex}
	\begin{proof}[Proof of Lemma \ref{l7}]
		We may assume that there is at least one admissible pair $(q_1,q_2)$, otherwise we are done. Call $d=\gcd(q_1,q_2)$, so $q_1=dm_1$, $q_2=dm_2$ with $\gcd(m_1,m_2)=1$ and $d\sim D$.
		Let also $a_1,a_2,b_1,b_2,p,f$ satisfy \eqref{e15}-\eqref{e15despref}. These quantities will be fixed, and will serve as a reference point throughout the argument.
		\\
		\\
		Case 1.  Assume  that we can make a choice of $q_1,q_2,a_1,a_2$ such that $\frac{a_1}{q_1}+\frac{a_2}{q_2}\not =0$. Then for any other choice  $q_3,q_4,a_3,a_4$ satisfying \eqref{e15} we must also have $\frac{a_3}{q_3}+\frac{a_4}{q_4}\not =0$. Otherwise, the fact that $|\frac{a_1}{q_1}+\frac{a_2}{q_2}|\gtrsim 1/Q^2$ leads to the contradiction $1/NQ2^l\lesssim 1/Q^2$.
		
		Assume further that, for some $q_1,q_2$ as above,  we can make a choice for $b_1,b_2$ such that $\frac{b_1}{q_1}+\frac{b_2}{q_2}\not =0$.

		Denote the integers
		$$\alpha=\frac{a_1m_2+a_2m_1}{p}$$
		$$\beta=\frac{dm_1m_2}{p}$$
		and recall that they are relatively prime, and $\alpha, \beta\not=0$. Moreover, 
		$\frac{a_1}{q_1}+\frac{a_2}{q_2}=\frac{\alpha}{\beta}$.

		Consider another admissible pair $q_3=d'm_3,q_4=d'm_4$ with $\gcd(m_3,m_4)=1$, and call $a_3,a_4,b_3,b_4,p',f'$ the associated quantities for which \eqref{e15}-\eqref{e15despref} hold true. We have
		\begin{equation}
			\label{ewkjwehfrheu}
			\frac{a_1}{q_1}+\frac{a_2}{q_2}=\frac{a_3}{q_3}+\frac{a_4}{q_4}+ O(\frac1{2^lNQ}).
		\end{equation}
		
		The common denominator for the expression $\frac{a_1}{q_1}+\frac{a_2}{q_2}-\frac{a_3}{q_3}-\frac{a_4}{q_4}$ is $\frac{d}pm_1m_2\frac{d'}{p'}m_3m_4\sim\frac{Q^4}{D^2P^2}$, so we may rewrite \eqref{ewkjwehfrheu} as
		
		\begin{equation}
        \label{f7yf78yrf7yrf7yre7fyr7e8fy7re} 	
		\frac{\alpha}{\beta}=\frac{\frac{a_3m_4+a_4m_3}{p'}}{\frac{d'}{p'}m_3m_4}+\frac{X}{\beta \frac{d'}{p'}m_3m_4}.
		\end{equation}
		
		Here $X$ is an integer. Combining \eqref{ewkjwehfrheu} with \eqref{f7yf78yrf7yrf7yre7fyr7e8fy7re}, it follows that  $$|X|\lesssim \frac{{\beta \frac{d'}{p'}m_3m_4}}{2^lNQ} \sim \frac{Q^3}{D^2P^2N2^l}.$$ We will not use the fact that $X$ is small, only the fact that $X$ has a small number of possible values, namely $1+O(\frac{Q^3}{D^2P^2N2^l})$.

		We find that
		$$\alpha(\frac{d'}{p'}m_3m_4)\equiv X\pmod \beta$$
		or, with $\alpha^{-1}$ being the inverse mod $\beta$,
		$$\frac{d'}{p'}m_3m_4\equiv \alpha^{-1}X\pmod \beta.$$
		
		Note that $\frac{d'}{p'}m_3m_4\sim \beta$, so
		for each $X$, $\frac{d'}{p'}m_3m_4$ may take $O(1)$ values. Each such value determines $\frac{d'}{p'},m_3,m_4$  up to $\les 1 $ many choices. There are $\sim P$ possibilities for $p'$, thus $\les P$ possible values of $d'$. We conclude that there are $\les P$ admissible pairs $q_3,q_4$ for each $X$.
		
		For the  nonzero values of $X$, we are content with the upper bound $\les \frac{Q^3}{D^2PN2^l}$ for the total number of associated admissible pairs $q_3,q_4$. When $X=0$, there are $\les P$ such pairs.
		\smallskip
		
		We will prove an alternative upper bound to $\les P$ in the case of $X=0$. Since $\gcd(\alpha,\beta)=1=\gcd(\frac{a_3m_4+a_4m_3}{p'},{\frac{d'}{p'}m_3m_4})$, the equality
		$$\frac{\alpha}{\beta}=\frac{\frac{a_3m_4+a_4m_3}{p'}}{\frac{d'}{p'}m_3m_4}$$
		forces $\frac{d'}{p'}m_3m_4=\beta.$ This determines the values $m_3,m_4$ and $d'/p'$ up to $\les 1$ many choices. At this point, we might potentially have as many as $\sim P$ choices for $p'$, leading to the previous upper bound $\les P$ for the number of choices of $d'$. We next show a different estimate, that will work better in our applications.
		Recall that
		\begin{equation}
			\label{jgefefcpp[rt[p]]}
			\frac{b_1}{q_1}+\frac{b_2}{q_2}=\frac{b_3}{q_3}+\frac{b_4}{q_4}+O(\frac1{Q2^l})
		\end{equation}
		and that
		$$\frac{b_1}{q_1}+\frac{b_2}{q_2}=\frac{B}{\frac{p}{f}\beta}$$
		$$\frac{b_3}{q_3}+\frac{b_4}{q_4}=\frac{B'}{\frac{p'}{f'}\beta},$$
		for some integers $B,B'$ with $\gcd(B,p/f)=\gcd(B',p'/f')=1$. Clearing denominators in  \eqref{jgefefcpp[rt[p]]} we find that
		$$B\frac{p'}{f'}=B'\frac{p}{f}+y,$$
		for some $|y|\lesssim \frac{\beta\frac{p}{f}\frac{p'}{f'}}{Q2^l}\sim \frac{QP}{DF^22^l}$. Again, we only care about the fact that the number of possible $y$ is small, not about the size of individual $y$.
		
		Recall that  $\frac{b_1}{q_1}+\frac{b_2}{q_2}\not =0$.
		This is equivalent with $B\not=0$.
		Since $\gcd(B,p/f)=1$ and $p'/f'\sim p/f$, it follows that in the above equation, the value of $y$ determines the value of $p'/f'$ up to $O(1)$ choices. Since there are $\sim F$ possible values of $f'$ and $\lesssim 1+\frac{QP}{DF^22^l}$ values of $y$, we find that there are $O(F+\frac{QP}{DF2^l})$ possible values for $p'$. We conclude that there are $\les F+\frac{QP}{DF2^l}$ admissible pairs $q_3,q_4$ corresponding to $X=0$.
		\begin{re}
			\label{roleoffexplained}	
			It is important to recall the role of the parameter $f'$ in the previous argument for $X=0$. We used the formula $d'=\frac{d'}{p'}\times \frac{p'}{f'}\times f'$ to determine the number of possibilities for $d'$ as follows. First, we used the equation for $t$ to show that there are $\les 1$ choices for $d'/p'$. We then used the equation for $x$ to control the number of possible $p'/f'$. Finally, we used the fact that $f'$ can take $\sim F$ values. This argument explains why we defined $f$ as in \eqref{iojurifuriurugitug} (making $f$ a divisor of $p$), as opposed to $\gcd(\frac{b_1q_2+b_2q_1}{d},d)$ (making $f$ a divisor of $d$). We do not see a way to simplify  the forthcoming argument by working instead with the latter choice for $f$.
		\end{re}
		Case 2. Here we still operate under the assumption that $\frac{a_1}{q_1}+\frac{a_2}{q_2}\not=0$ for all admissible pairs $q_1,q_2$.
		However, we also assume that for each such $q_1,q_2$ we cannot select $b_1,b_2$ such that $\frac{b_1}{q_1}+\frac{b_2}{q_2}\not =0$. 
		
		Using the notation from the previous case, it follows that $B=B'=0$.
		This in turn implies $p'=f'$, proving that there are $O(F)$ choices for $p'$. As in the previous case, we conclude that there are $\les F$ admissible pairs $q_3,q_4$ corresponding to $X=0$.
		\\
		\\
		Case 3. Assume now that $\frac{a_1}{q_1}+\frac{a_2}{q_2}=0$ for all admissible pairs $q_1,q_2$ and all $a_1,a_2$ as in \eqref{e15}.
		This forces $q_1=q_2$ for all admissible pairs. This further implies that $p=d=q_1=q_2$, so this case corresponds to $P=D\sim Q$. We have thus established the first upper bound $P$ for the number of admissible pairs.
		
		Assume also that we can make a selection of $q_1,q_2,b_1,b_2$ such that
		$\frac{b_1}{q_1}+\frac{b_2}{q_2}\not=0$. This will allow us to find a second upper bound for the number of admissible pairs $q_3,q_4$, as follows. Writing $q=q_1=q_2$, $q'=q_3=q_4$ and  $$\frac{b_1}{q}+\frac{b_2}{q}=\frac{B}{q/f}$$ $$\frac{b_3}{q'}+\frac{b_4}{q'}=\frac{B'}{q'/f'}$$ we find that
		$$B\frac{q'}{f'}=B'\frac{q}{f}+y,$$
		for $|y|\lesssim \frac{Q}{F^22^l}$. Since $B\not=0$ and $\gcd(B,q/f)=1$, the value of $y$ determines the value of $q'/f'$. We find an upper bound of $\lesssim F(1+\frac{Q}{F^22^l})$ for the number of admissible pairs of the form  $(q',q')$. This matches \eqref{admissible}, as $P=D$.
		\\
		\\
		Case 4. Assume that for all choices satisfying \eqref{e15}-\eqref{e15despref} we have  $\frac{a_1}{q_1}+\frac{a_2}{q_2}=\frac{b_1}{q_1}+\frac{b_2}{q_2}=0$. The latter condition also forces $F=P$, so in this case we have $F,P,D\sim Q$. The upper bound \eqref{admissible} is trivially satisfied as there are at most $Q$ admissible pairs, as in the previous case.

	\end{proof}

	Lemma \ref{l7} will only be used  with $\min\{P,F+\frac{QP}{DF2^l}\}$ replaced by $F+\frac{QP}{DF2^l}$. We next investigate solutions for the $x$-component.

	\begin{lem}
		\label{l8}
		Let $x,t$ be arbitrary and let $(q_1,q_2)$ be an admissible pair for $x,t$.
		Let $\Bc(q_1,q_2)$ be the collection of all $b_1$ that satisfy \eqref{e15}-\eqref{e15despref} for some $a_1,a_2,b_2,f$ (these parameters are allowed to depend on $b_1$, while according to Remark \ref{invariant}, the value of $p$ is determined by $q_1,q_2,x,t$). Then $\Bc(q_1,q_2)$ has cardinality
		$$\les D+\frac{Q}{2^lF}.$$
	\end{lem}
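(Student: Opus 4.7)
The plan is to reduce the count to a lattice-point problem. Fix the admissible pair $(q_1,q_2)$, and write $d=\gcd(q_1,q_2)\sim D$, $q_i=dm_i$, $\gcd(m_1,m_2)=1$, so $m_1,m_2\sim Q/D$ and $dm_1m_2\sim Q^2/D$. By Remark \ref{invariant}, the value of $p$ is determined by $(q_1,q_2)$ together with $x,t$, so I treat it as fixed throughout. For each $b_1\in\Bc(q_1,q_2)$ with associated $b_2$, encode the pair by the integer
$$k=k(b_1)=b_1m_2+b_2m_1,\quad\text{so that}\quad \frac{b_1}{q_1}+\frac{b_2}{q_2}=\frac{k}{dm_1m_2}.$$

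The first step is to localize $k$. Hypothesis \eqref{e15despreb} rewrites as $|k-x\,dm_1m_2|\lesssim dm_1m_2/(2^lQ)\lesssim Q/(D2^l)$, so $k$ lies in an interval of length $O(Q/(D2^l))$. Next, I exploit \eqref{e15despref}: by the definition $f=\gcd(k,p)$, we have $f\mid k$ and $f\mid p$, with $f\sim F$. The divisor bound hidden in $\les$ gives that $p$ has $\les 1$ divisors of size $\sim F$, so $f$ takes $\les 1$ possible values. Fixing such an $f$, the admissible $k$ are multiples of $f$ inside an interval of length $O(Q/(D2^l))$, yielding
$$\#\{k\}\les 1+\frac{Q}{DF\,2^l}.$$

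The second step is to bound, for each fixed $k$, the number of $b_1$ arising. Since $\gcd(m_1,m_2)=1$, the equation $b_1m_2+b_2m_1=k$ has its solutions parameterized by $(b_1,b_2)=(b_1^0+m_1t,\,b_2^0-m_2t)$, $t\in\Z$. The constraint $|b_1|<q_1=dm_1$ forces $t$ to lie in an interval of length $\le 2d$, giving $O(D)$ values of $t$, and hence $O(D)$ distinct values of $b_1$ (distinct $t$ produce distinct $b_1$). Multiplying the two bounds,
$$|\Bc(q_1,q_2)|\les D\cdot\Bigl(1+\frac{Q}{DF\,2^l}\Bigr)=D+\frac{Q}{F\,2^l},$$
which is the claimed estimate.

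The argument is essentially routine given the careful bookkeeping of $p$ and $f$ already carried out, so I do not expect a serious obstacle; the only point requiring a moment's care is that $f$ has $\les 1$ possible values in its dyadic window, which is immediate from the divisor bound since $f\mid p$. Note that this is precisely the gain that the constraint \eqref{e15despref} buys over the naive bound $D+Q/2^l$ one would obtain by ignoring $f$.
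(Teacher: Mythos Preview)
Your proof is correct and follows essentially the same approach as the paper's. Both arguments localize the integer $k=b_1m_2+b_2m_1$ (the paper writes this as $fB$ via a reference pair and difference parameter $Y$) to $\les 1+Q/(DF2^l)$ values using the divisibility by $f$, and then count $O(D)$ pairs $(b_1,b_2)$ for each fixed $k$ via the structure of solutions to the linear equation $b_1m_2+b_2m_1=k$; your parameterization $(b_1^0+m_1t,\,b_2^0-m_2t)$ is exactly the paper's observation that two solutions with the same $k$ satisfy $m_j\mid b_j'-b_j''$.
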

	\begin{proof}
		Write as before $q_1=dm_1$, $q_2=dm_2$ with $d\sim D$, $\gcd(m_1,m_2)=1$. Note that since
		$$|\frac{b_1}{q_1}+\frac{b_2}{q_2}-x|\lesssim \frac{1}{Q2^l},$$
		the value of $b_1$ determines the value of $b_2$ uniquely. Thus, we are in fact counting pairs $b_1,b_2$. As observed in Remark \ref{invariant}, there are $\les 1$ possible values of $f$. We can thus count the pairs satisfying
		$$\gcd(b_1m_2+b_2m_1,p)=f$$
		for a fixed $f$. We have  $\frac{b_1}{q_2}+\frac{b_2}{q_2}=\frac{B}{\frac{d}{f}m_1m_2}$, for some integer $B$.

		Fix such a pair $(b_1,b_2)$ for reference. 	
		Any other such pair $(b_1',b_2')$ would have to satisfy
		$$\frac{b_1}{q_1}+\frac{b_2}{q_2}=\frac{b_1'}{q_1}+\frac{b_2'}{q_2}+\frac{Y}{\frac{d}{f}m_1m_2}$$
		for some integer $|Y|\lesssim \frac{Q}{DF2^l}$. Note that there are $O(1+\frac{Q}{DF2^l})$ such integers.  For each such $Y$, two solutions would satisfy
		$$\frac{b_1'}{q_1}+\frac{b_2'}{q_2}=\frac{b_1''}{q_1}+\frac{b_2''}{q_2}.$$
		This forces $m_j|b_j'-b_j''$, so there are $\sim D$ many solutions for each $Y$.
		
	\end{proof}	
	
	The extra gain in $F$ in the factor $Q/2^lF$ will not be used in the argument. It is merely recorded for sharpness and future possible use in the range $M\lll N^{1/10}$.
	
	The previous two lemmas combine to allow at most
	\begin{equation}
		\label{jhdhdhudfud}
		(F+\frac{QP}{DF2^l}+\frac{Q^3}{2^lND^2P})(\frac{Q}{2^l}+D)\end{equation}
	fractions $b_1/q_1$ for each $(x,t)$.
	We next discuss the sharpness of this estimate for three families of parameters $(D,P,F)$.
	\begin{re}
		Let us test \eqref{jhdhdhudfud} when $D=1$ (thus also $P=F=1$). In the language of Definition \ref{enumbers} we get that for each box $B$
		$$n_B^*\les (1+\frac{Q^3}{2^lN})(1+\frac{Q}{2^l}).$$
		In the case when $Q\gtrsim \sqrt{N}$ this becomes
		\begin{equation}
			\label{sharppppp}
			n_B^*\les \frac{Q^4}{N2^{2l}}.
		\end{equation}
		This matches the lower bound in Proposition \ref{averagevalues}, proving the sharpness of our upper bounds for $D=1$. As explained earlier, this value of $D$ is the main obstruction for extending our argument beyond the threshold $M\ges N^{1/10}$.

	\end{re}
	
	\begin{re}
		\label{enemiesexplained}	
		If we take $\gcd(a,q)=\gcd(b,r)=1$ in Example \ref{exenemies}, then $D\sim Q$, $P\sim Q/q$ and $F\sim Q/rq$. We have identified $\approx Q^2/rq$ fractions $b_1/q_1$. This number is consistent with the upper bound \eqref{jhdhdhudfud}, as it coincides with $DF$. Moreover, it is easy to check that when $r$ is small, e.g. $r\les 1$, we have that \eqref{jhdhdhudfud} $\sim DF$ for all values of $l$.
	\end{re}
	
	\begin{re}
		The first part of Example \ref{interexam} has $D,P,F\sim \sqrt{Q}$, and there are $\approx Q$ solutions for the typical popular pair. This is again $\sim DF$. Moreover, if $2^l\gtrsim N^{1/3}$ we have  \eqref{jhdhdhudfud} $\sim DF$.
	\end{re}
	
	\begin{de}
		Let $x,t$ be arbitrary real numbers. 
		Fix $N, Q,l$  as before (so $2^lQ\lesssim N$). Fix also $D,P,F$.
		
		Then we let $\Lc(x,t)=\Lc(x,t,N,Q,l,D,P,F)$ be the maximum number of points $(x_{m,1},t_{m,1})$, $(x_{m,2},t_{m,2})\in \R^2$, $1\le m\le \Lc(x,t)$, such that
		
		$$(x_{m,1},t_{m,1})\in [\frac{b_1}{q_1}-\frac{1}{2^lQ},\frac{b_1}{q_1}+\frac{1}{2^lQ}]\times [\frac{a_1}{q_1}-\frac{1}{2^lQN},\frac{a_1}{q_1}+\frac{1}{2^lQN}],$$
		$$(x_{m,2},t_{m,2})\in [\frac{b_2}{q_2}-\frac{1}{2^lQ},\frac{b_2}{q_2}+\frac{1}{2^lQ}]\times [\frac{a_2}{q_2}-\frac{1}{2^lQN},\frac{a_2}{q_2}+\frac{1}{2^lQN}],$$
		for some $q_i=q_{m,i}\sim Q$, and $a_i=a_{m,i},b_i=b_{m,i}$ satisfying $|a_i|,|b_i|\le q_i-1$, $\gcd(a_i,q_i)=1$, $\gcd(q_1,q_2):=d\sim D$,	$$\gcd(\frac{a_1q_2+a_2q_1}{d},\frac{q_1q_2}{d}):=p\sim P,$$$$\gcd(\frac{b_1q_2+b_2q_1}{d},p)\sim F,$$
		$$x_{m,1}+x_{m,2}=x,$$
		$$t_{m,1}+t_{m,2}=t,$$
		and also such that $x_{m,1}$ are $1/N$-separated (or equivalently, $x_{m,2}$ are $1/N$-separated).
	\end{de}

	We get the following corollary of the previous two lemmas.

	\begin{pr}
		\label{p134}	
		For each $x,t$ we have the estimate	
		$$\Lc(x,t)\les (F+\frac{QP}{DF2^l}+\frac{Q^3}{2^lND^2P})(\frac{Q}{2^l}+D)\frac{N}{Q2^l}.$$	
	\end{pr}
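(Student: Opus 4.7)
I would prove Proposition~\ref{p134} by a direct counting argument that combines Lemma~\ref{l7} (which controls admissible pairs $(q_1,q_2)$) and Lemma~\ref{l8} (which controls the numerators $b_1$ available for each admissible pair) with the $1/N$-separation of the $x_{r,1}$. The guiding idea is that each of the points $(x_{r,1},t_{r,1})$ contributing to $\Lc(x,t)$ carries a ``certificate'' $(q_1,q_2,b_1)$ drawn from one of its representations; certificates need not be unique, but over-counting is harmless for an upper bound.

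First, I would over-count by writing
$$\Lc(x,t)\;\le\; \sum_{(q_1,q_2)\text{ admissible}}\;\sum_{b_1\in\Bc(q_1,q_2)} \#\Big\{r:\; x_{r,1}\in \Big[\tfrac{b_1}{q_1}-\tfrac{1}{2^lQ},\,\tfrac{b_1}{q_1}+\tfrac{1}{2^lQ}\Big]\Big\}.$$
The outer sum has at most $\les F+\frac{QP}{DF2^l}+\frac{Q^3}{2^lND^2P}$ nonzero terms by Lemma~\ref{l7} (in the simplified form recorded after that lemma). For each admissible pair, the inner sum has at most $\les D+\frac{Q}{2^lF}$ nonzero terms by Lemma~\ref{l8}.

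For each fixed certificate $(q_1,q_2,b_1)$, the valid $x_{r,1}$ lie in an interval of length $\sim 1/(2^lQ)$, and the hypothesis that the $x_{r,1}$ are $1/N$-separated forces at most $\lesssim \frac{N}{Q2^l}+1$ such values. Under the standing assumption $2^l\lesssim N/Q$ we have $N/(Q2^l)\gtrsim 1$, so the ``$+1$'' is absorbed and this factor simplifies to $\lesssim N/(Q2^l)$. Multiplying the three bounds yields exactly
$$\Lc(x,t)\les \Big(F+\frac{QP}{DF2^l}+\frac{Q^3}{2^lND^2P}\Big)\Big(\frac{Q}{2^l}+D\Big)\frac{N}{Q2^l}.$$

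There is essentially no subtlety in the final step: because the $x_{r,1}$ are $1/N$-separated they are in particular distinct, so counting the pairs parametrized by $r$ amounts to counting distinct $x_{r,1}$'s, and the companions $x_{r,2}=x-x_{r,1}$, $t_{r,2}=t-t_{r,1}$ are determined. Consequently no multiplicity in $t_{r,1}$ or in the choice of $(a_1,a_2)$ needs to be tracked in this step. All the genuine content sits in Lemmas~\ref{l7} and~\ref{l8}; the present proposition is a clean bookkeeping consequence together with elementary interval packing.
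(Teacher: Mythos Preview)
Your proof is correct and follows essentially the same approach as the paper: associate to each $r$ a certificate $(q_{r,1},q_{r,2},b_{r,1})$, bound the number of admissible pairs via Lemma~\ref{l7}, the number of $b_1$'s per pair via Lemma~\ref{l8}, and the number of $1/N$-separated $x_{r,1}$'s per interval by $N/(Q2^l)$. The only cosmetic difference is that you quote the full bound $D+\tfrac{Q}{2^lF}$ from Lemma~\ref{l8} before relaxing it to $D+\tfrac{Q}{2^l}$, whereas the paper drops the $F$ immediately; since $F\ge 1$ this is harmless.
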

	
	\begin{proof}
		For each $m$, let $a_{m,1},a_{m,2},b_{m,1},b_{m,2},q_{m,1},q_{m,2}$  be such that
		$$|\frac{a_{m,j}}{q_{m,j}}-t_{m,j}|\lesssim \frac1{NQ2^l}$$	
		$$|\frac{b_{m,j}}{q_{m,j}}-x_{m,j}|\lesssim \frac1{Q2^l}.$$
		Since $$|\frac{a_{m,1}}{q_{m,1}}+\frac{a_{m,2}}{q_{m,2}}-t|\lesssim \frac1{NQ2^l},$$
		Lemma \ref{l7} shows that there are $\les   F+\frac{QP}{DF2^l}+\frac{Q^3}{2^lND^2P}$ admissible pairs $(q_{m,1},q_{m,2})$. Fix such a pair. Then since
		$$|\frac{b_{m,1}}{q_{m,1}}+\frac{b_{m,2}}{q_{m,2}}-x|\lesssim \frac1{Q2^l},$$
		Lemma \ref{l8} shows that there are $O(D+\frac{Q}{2^l})$ possible values for $b_{m,1}$. Using $|b_{m,1}/q_{m,1}-x_{m,1}|\lesssim 1/Q2^l$ and the $1/N$-separation of $x_{m,1}$, the result follows.

	\end{proof}	
	\bigskip
	\subsection{Brief discussion of different ranges}This is a brief outline of the remaining subsections.
	Recall that we may assume \eqref{fiu3u9fiewld;.} for some fixed $D,P,F$, which we repeat here for easy reference
	\begin{equation}
		\label{fiu3u9fiewld;.easyref}
		\sum_{(v_1,v_2)\in\Pc_{D,P,F}}\Nc_{D,P,F}(v_1,v_2)\ges R^3/K^2.
	\end{equation}
	
	There are three cases. We exploit \eqref{fiu3u9fiewld;.easyref} in two different ways. The counting argument employed in Case 1 uses the fact that there is a pair of vertices sharing $\ges R/K^2$ neighbors.
	
	On the other hand, Cases 2 and 3 use another consequence of \eqref{fiu3u9fiewld;.easyref}, the existence of a fork of size $\ges R/K$.
	\bigskip

	1. The first range we discuss corresponds to $FD\les Q$ and $P\les 2^l$. Heuristically, this is the case of small $D$, in particular it covers the case $D\lesssim \sqrt{Q}$. In this range Proposition \ref{p134} gives the ideal estimate $\Lc(x,t)\les Q$, and working with just one popular pair solves the problem.
	
	This argument requires $M\ges N^{1/10}$.
	It is this case only that enforces this severe restriction  on $M$ with exponent $1/10$. Moreover, this restriction is only needed when $D\les 1$. See Remark \ref{djhjdhjhjh}.
	
	2. We next cover the range of large $P$, namely $P\ggg 2^l$. This case is the most delicate and the most interesting one. We identify a localized piece of the graph we call {\em fork} and find arithmetic structure in it.   This is the only case that uses the hypothesis \eqref{e1again}. We use it for a subset of the original collection of $(x_r,t_r)$, one that is highly structured arithmetically. The needed lower bound is $M\ges N^{1/12}$, slightly more relaxed than $M\ges N^{1/10}$.
	
	A very simple version of this argument is shown to solve the case $D\sim Q$ that is responsible for the ``enemies" described in Example \ref{exenemies}.

	3. The third case is $FD\ges Q$. This is the argument that exposes the role of the parameter $f$. It uses the fork again and some very easy counting. It does not need any lower bound on $M$.

	\begin{re}
		\label{roleoff}	
		Our argument will not use induction on scales (Case 2 above does use the induction hypothesis at a smaller scale, but never the conjectured upper bound for smaller scales). One may fantasize about proving Conjecture \ref{c1} using induction on $Q$, for fixed $M$. The  small values of $Q$ can easily be covered by  Fourier analysis, and would serve as the base case of the induction. There is however a (very limited) range of parameters that could be addressed with this method.
		
		More precisely, assume  $DP\gg Q$, so that
		$$\frac{a_1}{q_1}+\frac{a_2}{q_2}=\frac{A}{q}$$
		with $q\ll Q$ and $(A,q)=1$.
		If it also happens that $f=p$, then we have $\frac{b_1}{q_1}+\frac{b_2}{q_2}=\frac{B}{q}$ for some integer $B$. Assume the graph $G$ is dominated by this scenario, in other words assume that \eqref{fiu3u9fiewld;.easyref} holds with some $D,P,F$ satisfying  $F=P$ and $DP\gg Q$. Then all popular pairs $v_1,v_2$ have their differences $v_2-v_1$ inside intervals of the form  $$[\frac{B}{q}-\frac{1}{Q2^l},\frac{B}{q}+\frac{1}{Q2^l}]\times [\frac{A}{q}-\frac{1}{NQ2^l},\frac{A}{q}+\frac{1}{NQ2^l}].$$
		Pigeonholing, there is $Q_1\ll Q$ such that a significant fraction of the popular pairs are inside such intervals with $q\sim Q_1$. All these intervals are inside $S_{Q_1,l_1}$ where $Q2^l=Q_12^{l_1}$. Let $K_1=2^{l_1/2}/M^2$. Note that $K_1\ge K$. It can be shown (\cite{Ru}) that there are at least $\gtrsim R^2/K$ popular pairs. This forces
		$$ R^2/K\les \sum_{v_2,v_2'\in V(G)}1_{S_{Q_1,l_1}}(v_2-v_2'),$$
		and thus
		$$ R^2/K_1\les \sum_{v_2,v_2'\in V(G)}1_{S_{Q_1,l_1}}(v_2-v_2').$$
		Using the induction hypothesis at scale $Q_1$ we find that
		$$R=|V(G)|\les N/M^4,$$
		as desired.
		
		Induction on scales as described here seems limited to this small range of parameters. Example \ref{exenemies} shows that $F$ can be much smaller than $P$ (cf. Remark \ref{enemiesexplained}, when $r\gg 1$).

	\end{re}
	
	\bigskip
	
	\subsection{The  argument  in the case $FD\les Q$ and $P\les 2^l$}
	\label{sub3}
	
	We want to prove the estimate $R\les N/M^4$ assuming $FD\les Q$ and $P\les 2^l$.
	
	We use the existence of one popular pair $(v_1,v_2)$, one satisfying $\frac{R}{K^2}\les \Nc_{D,P,F}(v_1,v_2)$. We also use Proposition \ref{p134} with $(x,t)=v_1-v_2$ to get that
	$$ \Nc_{D,P,F}(v_1,v_2)\le \Lc(x,t)\les (F+\frac{QP}{DF2^l}+\frac{Q^3}{N2^lD^2P})(\frac{Q}{2^l}+D)\frac{N}{Q2^l}.$$
	
	This gives the desired inequality
	
	\begin{equation}
	\label{fifiufiurfiurifior}	
	R\les \frac{2^l}{M^4}\frac{N}{Q2^l}(F+\frac{QP}{DF2^l}+\frac{Q^3}{N2^lD^2P})(\frac{Q}{2^l}+D)\les \frac{N}{M^4},
	\end{equation}
	once we prove the following result.
	\begin{lem}
		\label{t55nnjhfjh}	
		Assume $M\ges N^{1/12}$. Then
		$$\frac{Q^3}{N2^lD^2P}D\les Q.$$
		Assume $M\ges N^{1/10}$.	Then
		$$
		\frac{Q^3}{N2^lD^2P}\frac{Q}{2^l}\les Q.
		$$
		Assume $M\ges N^{1/12}$, $FD\les Q$ and $P\les 2^l$. Then
		$$(F+\frac{QP}{DF2^l})(\frac{Q}{2^l}+D)\les Q.$$
	\end{lem}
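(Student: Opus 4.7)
The plan is to reduce all three inequalities to elementary bookkeeping on powers of $M$, $N$, $Q$, $2^l$ using two background constraints that have been established earlier: $Q \les N/M^4$ (from \eqref{e5}) and $2^l \ges M^4$ (which follows from $K = 2^{l/2}/M^2 \ges 1$). The parameter ranges $1 \le F \le P \le D \le Q$ will also be freely used, together with the integrality bounds $D, P, F \ge 1$.

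For the first claim, the quantity $\frac{Q^3}{N 2^l D^2 P} \cdot D$ simplifies to $\frac{Q^3}{N 2^l D P}$. Using $DP \ge 1$, the claim reduces to $Q^2 \les N 2^l$. I will bound $Q^2 \les N^2/M^8$ and $N 2^l \ges N M^4$; the inequality becomes $N^2/M^8 \les N M^4$, i.e.\ $M^{12} \ges N$, which is precisely the hypothesis $M \ges N^{1/12}$. The second claim is the same idea one notch further: $\frac{Q^3}{N 2^l D^2 P} \cdot \frac{Q}{2^l} = \frac{Q^4}{N 2^{2l} D^2 P} \les Q$ reduces via $D^2 P \ge 1$ to $Q^3 \les N 2^{2l}$, and then $Q^3 \les N^3/M^{12}$ together with $2^{2l} \ges M^8$ gives the condition $N^2 \les M^{20}$, i.e.\ $M \ges N^{1/10}$.

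For the third claim, I will expand
\begin{equation*}
\Bigl(F + \frac{QP}{DF 2^l}\Bigr)\Bigl(\frac{Q}{2^l} + D\Bigr) = FD + \frac{QF}{2^l} + \frac{QP}{F 2^l} + \frac{Q^2 P}{D F 2^{2l}}
\end{equation*}
and verify that each of the four summands is $\les Q$. The term $FD \les Q$ is the hypothesis. For $\frac{QF}{2^l} \les Q$, it suffices that $F \les 2^l$, which follows from $F \le P \le 2^l$. For $\frac{QP}{F 2^l} \les Q$, it suffices that $P \le F 2^l$; since $F \ge 1$ and $P \le 2^l$ by hypothesis, this is immediate. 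The only summand that will actually consume the $M \ges N^{1/12}$ hypothesis is the last one: using $P \le D$ we get $\frac{Q^2 P}{D F 2^{2l}} \le \frac{Q^2}{F 2^{2l}} \le \frac{Q^2}{2^{2l}}$, and then $\frac{Q^2}{2^{2l}} \les Q$ becomes $Q \les 2^{2l}$, i.e.\ $N/M^4 \les M^8$, i.e.\ $M \ges N^{1/12}$.

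There is no real obstacle here; the lemma is bookkeeping designed to show that the hypotheses on $D, P, F, 2^l$ carved out in this subsection really do produce the target bound. The only step that deserves care is matching each summand in part 3 to the right combination of $P \le D$, $P \le 2^l$, $F \le P$, $FD \les Q$ and $F \ge 1$, and noticing that the worst summand $\frac{Q^2 P}{D F 2^{2l}}$ forces exactly the exponent $1/12$ (matching the threshold in parts 1 and 3, and being weaker than the exponent $1/10$ needed in part 2, which governs the small-$D$ regime elsewhere in the proof).
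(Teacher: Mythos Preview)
Your proof is correct and follows essentially the same approach as the paper's: both arguments reduce each claim to elementary inequalities among $Q$, $N$, $2^l$, $M$ using $Q \les N/M^4$, $2^l \ges M^4$, and the trivial bounds $D, P, F \ge 1$, $F \le P \le D$. The only cosmetic difference is that for the third claim the paper groups the cross terms as $2Q(\tfrac{P}{2^l} + \tfrac{Q}{2^{2l}}) + FD$ before invoking $P \les 2^l$ and $Q \les 2^{2l}$, whereas you expand into four summands and handle each separately; the underlying inequalities are identical.
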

	\begin{proof}	
		Note first that $\frac{Q^2}{N2^lDP}\le \frac{Q^2}{N2^l}\lesssim \frac{N}{2^{3l}}\les 1$ when $M\ges N^{\frac1{12}}$.
		\\
		\\
		Second, $\frac{Q^4}{N2^{2l}D^2P}\le \frac{Q^4}{N2^{2l}}\les Q$. The second inequality follows by combining $Q\lesssim N/2^l$ with $2^l\ges M^4\ges N^{2/5}$. The restriction $M\ges N^{\frac1{10}}$ is needed here, as $D$ (and $P$) could be as small as $1$.
		\\
		\\
		Third,  note that $Q\les 2^{2l}$ if $M\ges N^{1/12}$.
		Using this together with $FD\les Q$ and $P\les 2^l$ we find
		$$(F+\frac{QP}{DF2^l})(\frac{Q}{2^l}+D)\le 2Q(\frac{P}{2^l}+\frac{Q}{2^{2l}})+FD\les Q.$$
	\end{proof}
	
	\begin{re}
		\label{djhjdhjhjh}
		The proof of the lemma shows that, when it comes to verifying \eqref{fifiufiurfiurifior},  the severe lower bound $1/10$ in the exponent for $M$ is only needed in the case $D\les 1$ (as described in the second paragraph of the proof). If we knew that $D$ is larger, a more relaxed restriction would suffice. 
		
		It also shows that if $Q^3\les N2^{2l}$, then the weaker lower bound $M\ges N^{1/12}$ suffices.
	\end{re}

	\bigskip

	\subsection{The  argument  for the case $P\ggg 2^l$}
	\label{4.4}
	
	We will prove the estimate $R\les N/M^4$ assuming $P\ggg 2^l$, in the slightly more relaxed setting $M\ges N^{1/12}$.  We have in particular
	$$DP\ggg 2^{2l}\ggg Q.$$

	The arguments in this section cover the situation when
	$$
	(F+\frac{QP}{DF2^l}+\frac{Q^3}{2^lND^2P})(\frac{Q}{2^l}+D)\sim \frac{QP}{DF2^l}\times D.$$
	This scenario is implied by e.g. the choice of parameters $F\les 1$ and $P\ggg 2^l$. If a graph with such parameters exists (we do not have an example), the earlier counting argument falls apart as $\frac{QP}{DF2^l}\times D\ggg Q$. An inspection of the proofs of Lemmas \ref{l7} and \ref{l8} reveals that this hypothetical graph exhibits some rigidity. Most notably,  \eqref{ewkjwehfrheu} becomes a perfect equality
	$$
	\frac{a_1}{q_1}+\frac{a_2}{q_2}=\frac{a_3}{q_3}+\frac{a_4}{q_4}.$$
	We do not see a way to exploit this rigidity in order to simplify the argument in this subsection.

	\smallskip

	We start with an easy, very instructive version of the argument that proves the bound $R\les N/M^{4}$ when $D=Q$ (the values of $P,F$ are irrelevant in this case). This argument shows how to deal with the enemies in Example \ref{exenemies}.
	
	We recall our working hypothesis, with $D=Q$
	$$
	\sum_{(v_1,v_2)\in\Pc_{D,P,F}}\Nc_{D,P,F}(v_1,v_2)\ges R^3/K^2.
	$$
	Note that
	$$
	\sum_{(v_1,v_2)\in\Pc_{D,P,F}}\Nc_{D,P,F}(v_1,v_2)=\sum_{(v_1,v_3)\in E(G)}\sum_{v_2:\;(v_1,v_2)\in\Pc_{D,P,F}\atop{v_3\in \Nc_{D,P,F}(v_1,v_2)}}1\le\sum_{(v_1,v_3)\in E(G)}\sum_{v_2:\atop{\;v_3\in \Nc_{D,P,F}(v_1,v_2)}}1.
	$$
	Combining it with \eqref{djjfvji} we find a pair $(v_1,v_3)$ such that
	\begin{equation}
		\label{djjirjirjfior}
		|S=\{v_2:\;v_3\in \Nc_{D,P,F}(v_1,v_2) \}|\ges R/K.
	\end{equation}
	We call $S$ (and its later refinements) a {\em fork}.  There is a (fixed) edge between $v_1,v_3$ (the handle of the fork), and there are edges (the tines) connecting $v_3$ to each element $v_2$ of  $S$
	
	Let $a_1,b_1,q_1$ be the unique numbers such that $(v_1-v_3)\blacksquare (a_1,b_1,q_1)$. For each $v_2$ in the set $S$ from above we must have, for some $a_2,b_2$, but with the same $q_1$ (since $D=Q$, all denominators can be assumed to be  equal)
	$$(v_2-v_3)\blacksquare(a_2,b_2,q_1).$$
	It follows that $$S-v_3\subset \bigcup_{|a|,|b|\le q_1-1}[\frac{b}{q_1}-\frac{1}{Q2^l},\frac{b}{q_1}-\frac{1}{Q2^l}]\times[\frac{a}{q_1}-\frac{1}{NQ2^l},\frac{a}{q_1}-\frac{1}{NQ2^l}].$$
	Thus the $x$ coordinates of $S-v_3$ lie in the set$$\bigcup_{|b|\le q_1-1}[\frac{b}{q_1}-\frac{1}{Q2^l},\frac{b}{q_1}-\frac{1}{Q2^l}],$$
	and since they are $1/N$-separated we find that
	$$|S|=|S-v_3|\lesssim Q\times \frac{N}{2^lQ}=\frac{N}{2^l}.$$
	Thus $$\frac{R}{K}\les \frac{N}{2^l},$$
	giving the desired $R\les N/M^4$, in fact even the superficially stronger $R\les N/2^{l/2}M^2$.
	
	This argument uses the transitivity of the relation $\gcd(q_1,q_2)\sim Q$  to show that the fork contains only one denominator $q_1$. While this is not true when $D\ll Q$, further refinements of the fork will still make crucial use of the small cardinality of denominators they contain.
	\medskip
	
	Let us now move to the general case $P\ggg 2^l$. Fix $(v_1,v_3)$ such that \eqref{djjirjirjfior} holds. Assume as before that $(v_3-v_1)\blacksquare (a_1,b_1,q_1)$. For each $v_2\in S$ we have
	$(v_2-v_3)\blacksquare (a_2,b_2,q_2)$, with $\gcd(q_1,q_2)\sim D$. Since $q_1$ is fixed, there are $\les 1$ possibilities for the value of $\gcd(q_1,q_2)$. We may thus find a $d\sim D$ such that the refined fork $S'$ satisfies
	$$|S'=\{v_2:\;v_3\in \Nc_{D,P,F}(v_1,v_2),\; (v_2-v_3)\blacksquare (a_2,b_2,q_2),\;\gcd(q_1,q_2)=d \}|\ges R/K.$$
	Since this $d$ has $\les 1$ divisors $p$, we may find  $p\sim P$ such that the refined fork $S''$
	$$S''=$$$$\{v_2:\;v_3\in \Nc_{D,P,F}(v_1,v_2),\; (v_2-v_3)\blacksquare (a_2,b_2,q_2),\;\gcd(q_1,q_2)=d,\;\gcd(\frac{a_1q_2+a_2q_1}{d},\frac{q_1q_2}{d})=p \}$$
	satisfies
	$$|S''|\ges R/K.$$
	\smallskip
	
	The next lemma uncovers critical structure in  $S''$.
	\begin{lem}
		\label{structure}	
		Let  $v_2,v_2'\in S''$. Assume $(v_2-v_3)\blacksquare(a_2,b_2,q_2)$ and $(v_2'-v_3)\blacksquare (a_2',b_2',q_2')$ with $q_2=dm_2$, $q_2'=dm_2'$. Then
		$$\frac{a_2}{q_2}-\frac{a_2'}{q_2'}=\frac{a}{q}$$
		for some divisor $q$ of $\frac{d}{p}m_2m_2'$, and some $a$ relatively prime to $q$.
	\end{lem}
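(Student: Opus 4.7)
The plan is to write the difference in a common form and show that the invariant prime factor $p$ already divides the numerator, so that it cancels against $d$ in the denominator. Using $q_2 = dm_2$ and $q_2' = dm_2'$, we have
$$\frac{a_2}{q_2} - \frac{a_2'}{q_2'} = \frac{a_2 m_2' - a_2' m_2}{d m_2 m_2'}.$$
Thus it suffices to establish that $p \mid a_2 m_2' - a_2' m_2$: reducing the fraction to lowest terms then produces a denominator $q$ dividing $\frac{d}{p} m_2 m_2'$, and coprimality $\gcd(a,q)=1$ is automatic for a fraction in lowest terms.

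The divisibility will come from unpacking what it means for $v_2$ and $v_2'$ to lie in $S''$. Both membership conditions enforce
$$p = \gcd\!\bigl(a_1 m_2 + a_2 m_1,\, d m_1 m_2\bigr) = \gcd\!\bigl(a_1 m_2' + a_2' m_1,\, d m_1 m_2'\bigr),$$
and in particular
$$p \mid a_1 m_2 + a_2 m_1 \quad\text{and}\quad p \mid a_1 m_2' + a_2' m_1.$$
Multiplying the first relation by $m_2'$ and the second by $m_2$ and subtracting eliminates the term containing $a_1$, leaving
$$p \mid m_1\bigl(a_2 m_2' - a_2' m_2\bigr).$$

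To finish, I would invoke Lemma \ref{primenessofp}, which guarantees $p \mid d$ and $\gcd(p, m_1) = 1$. The coprimality lets us strip the factor $m_1$, giving $p \mid a_2 m_2' - a_2' m_2$; combined with $p \mid d$ this yields the claimed cancellation. I do not expect any real obstacle: the whole argument amounts to clearing denominators once and using the coprimality of $p$ to $m_1$ already recorded in Lemma \ref{primenessofp}. The only mild subtlety is that one must be careful that the parameter $p$ is genuinely the same for both $v_2$ and $v_2'$, but this is built into the definition of $S''$.
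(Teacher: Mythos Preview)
Your proof is correct and follows essentially the same approach as the paper's: both arguments reduce to showing $p\mid a_2m_2'-a_2'm_2$ and then invoke $\gcd(p,m_1)=1$ from Lemma~\ref{primenessofp}. The paper obtains this divisibility by adding and subtracting $a_1/q_1$ and comparing two representations of the difference, whereas you obtain it directly from the two congruences $p\mid a_1m_2+a_2m_1$ and $p\mid a_1m_2'+a_2'm_1$; these are equivalent manipulations of the same information.
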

	\begin{proof}
		Write $q_1=dm_1$. On the one hand we trivially have
		$$\frac{a_2}{q_2}-\frac{a_2'}{q_2'}=\frac{A_1}{dm_2m_2'}.$$
		On the other hand,
		$$\frac{a_2}{q_2}-\frac{a_2'}{q_2'}=(\frac{a_2}{q_2}+\frac{a_1}{q_1})-(\frac{a_2'}{q_2'}+\frac{a_1}{q_1})=\frac{A_2}{\frac{d}{p}m_1m_2}-\frac{A_2'}{\frac{d}{p}m_1m_2'}=\frac{A}{\frac{d}{p}m_1m_2m_2'}.$$
		Here $A_1,A$ are integers.
		We find that
		$$A_1m_1=Ap.$$
		If $A_1=0$ we are done, taking e.g. $a=0$, $q=1$. So we may assume $A,A_1\not=0$. Recalling that $\gcd(p,m_1)=1$ (Lemma \ref{primenessofp}), we find that $p|A_1$. We may take $q=\frac{dm_2m_2'}{\gcd(dm_2m_2',A_1)}$.
		
	\end{proof}

	We next recall (see \eqref{e1again}) that for each $v_2=(x,t)\in S''$ we have
	$$|\sum_{n\in\Z}a_nw(\frac{n}{N})e(nx+n^2t)|\ge MN^{1/4}.$$
	We use this  hypothesis at the smaller scale of $|S''|$.
	Repeating the argument in Section \ref{sec2}, with the refinement in Remark \ref{dyad},
	we find that there must exist $Q_1$, $l_1$ with $Q_12^{l_1}\le N$ such that, with $K_1=\frac{2^{l_1/2}}{M^2}$ we have
	\begin{equation}
		\label{ririgiguggogtpogprth[ytp}
		|S''|^2/K_1\les \sum_{v_2,v_2'\in S''}1_{S^{dyad}_{Q_1,l_1}}(v_2-v_2').
	\end{equation}
	The previous lemma shows that for each $v_2,v_2'\in S''$, writing $v_2-v_2'=(x,t)$, we have that
	\begin{equation}
		\label{cdjdhruerifmc[ir]}
		|t-\frac{a}{q}|\lesssim \frac1{NQ2^l},
	\end{equation}
	for some $q\lesssim \frac{Q^2}{DP}$ and $a\in\Ac(q)$. Since we also have $DP\gtrsim Q$, we find that $q\lesssim Q$. Thus, we also have
	$$|t-\frac{a}{q}|\lesssim \frac{1}{Nq}.$$
	On the other hand, picking some (any!)  $v_2,v_2'\in S''$ with $v_2-v_2'\in S_{Q_1,l_1}^{dyad}$ and writing again $v_2-v_2'=(x,t)$, we must have\begin{equation}
		\label{cdjdhruerifmc[ir2]}
		|t-\frac{a'}{q'}|\lesssim \frac{1}{Nq'},
	\end{equation}
	for some $q'\sim Q_1$, $\gcd(a',q')=1$.

	Since $2^{l_1}, 2^l\ges M^4$ it follows that $Q, Q_1\ll N$. This shows that the  major arcs \eqref{cdjdhruerifmc[ir]} and \eqref{cdjdhruerifmc[ir2]} can only intersect if $q=q'$ (and also $a=a'$). Thus, $Q_1\lesssim Q^2/DP$.

	Also, since in fact
	$$|t-\frac{a'}{q'}|\begin{cases}\sim \frac{1}{NQ_12^{l_1}},\;\text{if }2^{l_1}<N/Q_1\\\lesssim 1/N^2,\;\text{if }2^{l_1}=N/Q_1\end{cases}$$
	and $a'/q'=a/q$, combining this with \eqref{cdjdhruerifmc[ir]} shows that
	$\frac{1}{NQ_12^{l_1}}\lesssim \frac{1}{NQ2^l}$, and thus
	\begin{equation}
		\label{critlow}
		2^{l_1-l}\gtrsim Q/Q_1\gtrsim DP/Q\ggg 1.
	\end{equation}
	\smallskip
	
	In the next proposition we seek an upper bound for $\sum_{v_2,v_2'\in S''}1_{S^{dyad}_{Q_1,l_1}}(v_2-v_2')$, that will eventually be used in conjunction with the lower bound \eqref{ririgiguggogtpogprth[ytp}.
	\begin{pr}
		For each $v_2\in S''$ we have
		$$|\{v_2'\in S'': v_2-v_2'\in S_{Q_1,l_1}^{dyad}\}|\les \frac{N}{2^{l_1}}.$$
	\end{pr}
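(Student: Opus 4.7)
My plan is to fix $v_2\in S''$ and to parametrize each admissible $v_2'\in S''$ by the unique triple $(a',q',b')$ such that the difference $v_2-v_2'$ lies in the corresponding dyadic box of $S_{Q_1,l_1}^{dyad}$: here $q'\sim Q_1$, $a'\in\Ac(q')$, and $0\le b'\le q'-1$. Applying Lemma \ref{structure} to $v_2$ and $v_2'$ (both in $S''$) gives $t_{v_2}-t_{v_2'}=a/q+O(1/(NQ2^l))$ with $q\mid (d/p)m_2m_2'$. The disjointness of major arcs at denominators $\sim Q_1\ll N$, carried out in the text just before the proposition, identifies $a/q=a'/q'$ in lowest terms, so $q'\mid (d/p)m_2m_2'$. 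Since $v_2$ fixes $m_2$, this is the key arithmetic constraint I intend to exploit.

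Next I would partition the count of valid $v_2'$ by $q'\sim Q_1$. For each fixed $q'$, the divisibility $q'\mid (d/p)m_2m_2'$ forces $m_2'$ to be a multiple of $w(q'):=q'/\gcd(q',(d/p)m_2)$, yielding at most $\lesssim Q/(Dw(q'))+1$ admissible $m_2'\in[Q/D,2Q/D]$ and hence at most that many $q_2'=dm_2'$. For each such $q_2'$, the \emph{exact} equation $a_2/q_2-a_2'/q_2'=a'/q'$ determines $a_2'\in\Ac(q_2')$ up to $O(1)$ choices once $a'$ is chosen, and the \emph{approximate} equation $b_2/q_2-b_2'/q_2'-b'/q'=O(1/(Q2^l))$ determines $b_2'$ up to $O(1)$ choices once $b'$ is chosen. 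Since $v_2'\in V(G)$ is pinned down by its $x$-coordinate, the $1/N$-separation of $x$-coordinates provides the additional savings needed in the $x$-direction: the $x$-component of $v_2-v_2'$ is confined to a union of intervals of width $\sim 1/(Q_12^{l_1})$ around $b'/q'$, and each such interval carries $\lesssim \max(1,N/(Q_12^{l_1}))$ vertex $x$-coordinates.

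Combining the per-$q'$ counts and invoking standard divisor-sum estimates $\sum_{q'\sim Q_1}\gcd(q',n)\les Q_1$, the total is reduced to an expression that, after exploiting the subsection hypotheses $P\gg 2^l$, $DP\gg Q$, and the inequality $2^{l_1}\gtrsim 2^l DP/Q$ from \eqref{critlow}, should be $\les N/2^{l_1}$. The main obstacle I anticipate is the final bookkeeping: the naive per-$q'$ bound obtained just from Lemma \ref{structure} and $x$-separation (roughly $QQ_1/D+Q_1^2$ when summed) is not always $\les N/2^{l_1}$, so one must carefully extract additional savings — plausibly by refining the $b_2'$-count via the coprimality $\gcd(a_2',q_2')=1$, the $p,f$ gcd conditions, or by arguing that only $O(1)$ of the $q'$ intervals in the $x$-slice of $S_{Q_1,l_1}^{dyad}$ are actually populated once $m_2'$ is forced to be divisible by $w(q')$. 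Ensuring these refinements combine cleanly with \eqref{critlow} is the delicate technical point of the proof.
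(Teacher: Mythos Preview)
Your proposal correctly identifies the overall architecture --- parametrize $v_2'$ via the fork relation $(v_2'-v_3)\blacksquare(a_2',b_2',q_2')$, apply Lemma \ref{structure}, and then count pairs $(q_2',b_2')$ --- but it stops short of the decisive step, as you yourself concede in the last paragraph. None of your three suggested refinements is the right one.

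The missing idea is this. Writing $m=\gcd(m_2,m_2')$, $m_2=mn_2$, $m_2'=mn_2'$ and letting $r$ be the residual common factor so that the reduced denominator is $q=\frac{d}{p}mn_2n_2'/r$ (hence $Q_1\sim Q^2/(DPmr)$), the key is that the \emph{approximate} $x$-equation
\[
\Big|\frac{b_2}{q_2}-\frac{b_2'}{q_2'}-\frac{b}{q}\Big|\lesssim \frac{1}{Q2^l}
\]
is in fact an \emph{exact} identity. Indeed, the left side has denominator dividing $dm_2m_2'\sim Q^2/D$, and $D2^l\gg Q$ is forced by the hypotheses $P\ggg 2^l$ and $M\ges N^{1/12}$ of this subsection, so the nearest nonzero value $\sim D/Q^2$ exceeds the error $1/(Q2^l)$. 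From the exact identity one reads off the divisibility $pr\mid b_2'n_2-b_2n_2'$; since $\gcd(pr,n_2)=1$ (using $\gcd(p,m_2)=1$ from Lemma \ref{primenessofp} and $\gcd(a_2,n_2)=1$ for $r$), this pins $b_2'$ to $O(Q/(pr))$ values for each fixed $q_2'$. Combined with $O(Q/(Dm))$ choices of $q_2'$ and $O(N/(Q_12^{l_1}))$ vertices per $(b_2',q_2')$, the product telescopes via $Q_1\sim Q^2/(DPmr)$ to $N/2^{l_1}$.

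Your scheme of summing over $q'\sim Q_1$ with the divisor bound $\sum_{q'}\gcd(q',(d/p)m_2)\les Q_1$ recovers at best the trivial $\les Q/D$ count on $m_2'$, with no gain. The paper instead observes that both $m$ and $r$ divide the \emph{fixed} integers $m_2$ and $\frac{d}{p}m_2$ respectively, so each takes $\les 1$ values; this replaces your sum over $q'$ entirely. Your guess that ``only $O(1)$ of the $b'$-intervals are populated'' is quantitatively off --- it is $O(Q/(pr))$ per $q_2'$ --- and the parameter $f$ plays no role in this proposition.
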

	\begin{proof}
		Assume as before $(v_2-v_3)\blacksquare(a_2,b_2,q_2)$ and $(v_2'-v_3)\blacksquare (a_2',b_2',q_2')$ with $q_2=dm_2$, $q_2'=dm_2'$.
		Lemma \ref{structure} shows that $a_2m_2'-a_2'm_2=pa'$, so that we have
		$$\frac{a_2}{q_2}-\frac{a_2'}{q_2'}=\frac{a'}{\frac{d}{p}m_2m_2'}.$$
		We next analyze ways in which this fraction may potentially be simplified to $a/q$ with $\gcd(a,q)=1$. In particular, we are interested in the size of $q$.
		
		Write $m=\gcd(m_2,m_2')$, $m_2=n_2m$, $m_2'=n_2'm$. Recall that $\gcd(m,p)=1$, so $m$ must divide $a'$. Writing  $a'=a''m$ we may first simplify the above fraction to $\frac{a''}{\frac{d}{p}mn_2n_2'}$. Let now $r=\gcd(a'', \frac{d}{p}mn_2n_2')$.  It follows that $q=\frac{dm}{pr}n_2n_2'=\frac{dm_2m_2'}{prm}$.
		Since we know that $q\sim Q_1$, we find that
		\begin{equation}
			\label{efo0rif0iig9ig9i}
			Q_1\sim \frac{Q^2}{DPmr}.
		\end{equation}
		
		We now move the analysis to the $x$ component. Call $x_2,x_2'$ the $x$-components of $v_2,v_2'$. We have from hypothesis that
		$$|(x_2-x_2')-(\frac{b_2}{q_2}-\frac{b_2'}{q_2'})|\lesssim \frac1{Q2^l}.$$ Since $v_2-v_2'\in S_{Q_1,l_1}^{dyad}$, there must exist $|b|\le q-1$ such that $|(x_2-x_2')-\frac{b}q|\lesssim \frac1{Q_12^{l_1}}$. Thus
		$$|\frac{b_2'}{dm_2'}-\frac{b_2}{dm_2}-\frac{b}{q}|\lesssim \frac1{Q2^l}+\frac1{Q_12^{l_1}}\sim \frac1{Q2^l},\;\;\;\text{by }\eqref{critlow}. $$
		Recalling that $q$ divides $dm_2m_2'$, we may write the expression inside the absolute value signs in the form $\frac{c}{dm_2m_2'}$. This forces $c=0$, with the immediate consequence
		\begin{equation}
			\label{rejficif0og-oyv56,o-}
			|(x_2-x_2')-(\frac{b_2}{q_2}-\frac{b_2'}{q_2'})|\lesssim \frac1{Q_12^{l_1}}.
		\end{equation}
		
		Indeed, if not, the previous inequality implies that $\frac1{dm_2m_2'}\lesssim \frac1{Q2^l}$, or $D2^l\lesssim Q$. However, this is impossible under our assumptions $P\ggg 2^l$ and $M\ges N^{1/12}$.
		
		We may thus write
		$$\frac{b_2'n_2-b_2n_2'}{dmn_2n_2'}=\frac{b}{q}=\frac{prb}{dmn_2n_2'}.$$
		In particular,
		\begin{equation}
			\label{eorjfrfprforfroegvhoyph}
			pr|b_2'n_2-b_2n_2'.\end{equation}
		
		We note that $\gcd(r,n_2)=1$. This follows from the fact that  $r$ divides $a_2n_2'-a_2'n_2$ and  $\gcd(a_2,n_2)=\gcd(n_2',n_2)=1$. Similarly, $\gcd(r,n_2')=1$. Thus, $r$ is a divisor of $\frac{dm}{p}$, in particular it is a divisor of the fixed quantity $\frac{dm_2}{p}$. Also $m|m_2$. We conclude that there are $\les 1$ possible values for the parameters $m,r$.
		\smallskip
		
		We are finally ready to count the number of $v_2'$. Fix $m,r$. We focus attention on those $v_2'$ for which $q_2'=dm_2'$ satisfies $\gcd(m_2,m_2')=m$. There are $O(Q/Dm)$ such $q_2'$. We fix $q_2'$
		and count the possible values of $b_2'$, such that $(v_2'-v_3)\blacksquare (a_2',b_2',q_2')$ for some $a_2'$. While each $b_2'$ may be associated with a different (but unique) $a_2'$, we may further restrict attention to those $b_2'$ for which the associated $a_2'$ satisfies
		$$\frac{a_2}{q_2}-\frac{a_2'}{q_2'}=\frac{a}{q}$$
		with $q=\frac{dm_2m_2'}{prm}$ and $\gcd(a,q)=1$. As observed above, this restriction forces \eqref{eorjfrfprforfroegvhoyph}. Recall that $n_2,n_2',b_2$ are fixed at this point, so if $b_2''/q_2'$ is another fraction in our restricted category, we must have
		$$pr|n_2(b_2''-b_2').$$
		Since $\gcd(p,m_2)=1$ (Lemma \ref{primenessofp}), we have $\gcd(p,n_2)=1$. Thus, $\gcd(pr,n_2)=1$, and $pr$ must divide $b_2''-b_2'$. This shows that we have $O(Q/pr)$ possible values of $b_2'$ subject to the previous restriction.
		
		We have proved that there are $\lesssim \frac{Q}{Dm}\frac{Q}{Pr}$ fractions $b_2'/q_2'$ associated with the fixed parameters $m,r$. For each such $b_2'/q_2'$, there could be only $O(N/Q_12^{l_1})$ values of $v_2'$ with $(v_2'-v_3)\blacksquare (a_2',b_2',q_2')$ and $v_2-v_2'\in S_{Q_1,l_1}^{dyad}$. This follows from \eqref{rejficif0og-oyv56,o-} and the $1/N$-separation of the points $x_2$. Invoking \eqref{efo0rif0iig9ig9i} concludes  that the number of such $v_2'$ is
		$$\lesssim \frac{Q^2}{DPmr}\frac{NDPmr}{Q^22^{l_1}}=\frac{N}{2^{l_1}}.$$
		To close the argument, we use the previous observation that $m,r$ may take $\les 1$ possible values.

	\end{proof}

	We now combine the previous proposition with \eqref{ririgiguggogtpogprth[ytp}
	and find $|S''|\les K_1\frac{N}{2^{l_1}}$. Since $|S''|\ges R/K$, and using \eqref{critlow} this gives
	\begin{equation}
		\label{foro5-oy-p.-7p=n887in=-}
		R\les 2^{\frac{l-l_1}{2}}\frac{N}{M^4}\lll \frac{N}{M^4}.
	\end{equation}

	This shows that  sharp examples (those for which $R\approx  N/M^4$) do not exist if  $DP\ggg Q$. This should be contrasted with Examples \ref{exemplu1} and \ref{interexam} featuring  graphs  with $DP\sim Q$.

	\bigskip
	
	\begin{re}
		The main point of Lemma \ref{structure} is to lead to the improved estimate $Q_1\lesssim Q^2/DP$ over the trivial estimate $Q_1\lesssim Q^2/D$. This improved estimate plays a critical role for us, as it forces  the  lower bound $2^{l_1-l}\gtrsim 1$ (see \eqref{critlow}) crucially used in \eqref{foro5-oy-p.-7p=n887in=-}. The trivial estimate would only give the useless lower bound $2^{l_1-l}\gtrsim D/Q$.
	\end{re}

	\subsection{Proof in the case $FD\ges Q$}
	\label{sub7}
	
	 It is the argument in this section that will make critical use of the parameter $f$. Our argument does not work under the weaker assumption $PD\ges Q$ (this information is not strong enough), see Remark \ref{notenough} below.
	\bigskip

	We further refine the fork $S''$ introduced in Subsection \ref{4.4} by writing, for some fixed $f$
	$$S'''=\{v_2:\;v_3\in \Nc_{D,P,F}(v_1,v_2),\; (v_2-v_3)\blacksquare (a_2,b_2,q_2) \text{ such that}$$$$\;\gcd(q_1,q_2)=d,\;\gcd(\frac{a_1q_2+a_2q_1}{d},\frac{q_1q_2}{d})=p,\; \gcd(\frac{b_1q_2+b_2q_1}{d},p)=f\}.$$
	Recall that $v_1,v_3$ are fixed such that $(v_1-v_3)\blacksquare (a_1,b_1,q_1)$.
	Recall that $p\sim P$ and $d\sim D$ are also fixed, but $a_2,b_2,q_2$ are arbitrary. Since $f|p$, we may pick $f\sim F$ such that
	$$|S'''|\ges R/K.$$
	We next find an upper bound for $|S'''|$. Due to $1/N$-separation, for each fixed $b_2,q_2$ there are $O(\frac{N}{Q2^l})$ possible $v_2$ satisfying $(v_2-v_3)\blacksquare (a_2,b_2,q_2)$ for some arbitrary $a_2$. It remains to count the possible pairs $(q_2,b_2)$. As $q_2=dm_2$ is determined by $m_2$, there are $Q/D$ choices for $q_2$. Let us now fix $q_2=dm_2$, and count how many $b_2$ may be paired with $q_2$. We use the key property that
	$$f|b_1m_2+b_2m_1.$$
	Recalling that $\gcd(p,m_1)=1$, it follows that $\gcd(f,m_1)=1$. Thus, any other $b_2'$ that is paired with $q_2$, since it also satisfies $f|b_1m_2+b_2'm_1,$ would be forced to satisfy $f|b_2-b_2'$. We find that there can only be $q_2/f\sim Q/F$ many $b_2$ that are paired with $q_2$.
	
	These lead to the upper bound
	$$|S'''|\lesssim \frac{N}{Q2^l}\frac{Q}{D}\frac{Q}{F}.$$
	Our critical assumption $FD\ges Q$ in this section  implies that $|S'''|\les N/2^l$. When combined with the lower bound $R/K$, we find
	\begin{equation}
		\label{betterandnoM}
		R\les \frac{N}{M^22^{l/2}}.
	\end{equation}
	
	\begin{re}
		\label{notenough}	
		The parameter $F$ allows us to count the entries $b_2$ paired with $q_2$, which eventually  leads to a good upper bound for $|S'''|$. For comparison, the parameter $P$ provides an upper bound on the number of $a_2$ that are paired with a given $q_2$. Indeed, if
		$$\gcd(\frac{a_1q_2+a_2q_1}{d},\frac{q_1q_2}{d})=\gcd(\frac{a_1q_2+a_2'q_1}{d},\frac{q_1q_2}{d})=p,$$
		then we must have in particular $p|(a_1m_2+a_2m_1)-(a_1m_2+a_2'm_1)=m_1(a_2-a_2')$. Since $\gcd(p,m_1)=1$, we find the  upper bound $Q/P$ for the number of $a_2$ associated with $q_2$. Thus, we have an upper bound of $Q^2/DP$ for the number of possible pairs $a_2,q_2$ in the relation $(v_2-v_3)\blacksquare (a_2,b_2,q_2)$.
		While $Q^2/QD\lesssim Q$, we learn nothing from this about the number of possibilities for $b_2$, and thus nothing about the size of $S'''$. Recalling that each $v_2$ is of the form $(x,t)$, we have found an upper bound on the possible choices for $t$. But since each $t$ can be associated with many $x$, we do not get a sensible bound on the number of these $x$. On the other hand, the successful argument in this section counts directly the possible entries for $x$, using implicitly that each $x$  is associated with only one $t$ (the entries $b_2,q_2$ determine the entry $a_2$ up to $O(N/2^lQ)$ possibilities).

	\end{re}
	
	\section{Recovering the $N^{1/3}$ bound in $L^6$ and beyond}
	\label{sub9}
	A (much) simpler version of the earlier argument  recovers the sharp $N^{1/3}$ bound in $L^6$
	$$\|\sup_t|\sum_{n\in\Z}a_nw(\frac{n}{N})e(nx+n^2t)|\|_{L^6([0,1])}\les N^{1/3}.$$

	Assuming again that for some $M\le N^{1/4}$ and $1\le r\le R$
	$$|\sum_{n\in\Z}a_nw(\frac{n}{N})e(nx_r+n^2t_r)|\ge MN^{1/4},$$
	we need to prove that
	$$R\les (N/M^4)^{3/2}.$$
	It suffices to prove this for $M\ggg N^{1/12},$ otherwise the inequality is trivial. There are only two cases.
	\\
	\\
	Case 1. Assume $FD\lesssim Q$. We use the bound $$R\les (F+\frac{QP}{DF2^l}+\frac{Q^3}{2^lND^2P})(\frac{Q}{2^l}+D)\frac{N}{QM^4},$$
	that follows by using Proposition \ref{p134} and the fact that there is at least one popular pair.
	We analyze the six terms. First,
	
	$$\frac{Q^3}{2^lND^2P}(\frac{Q}{2^l}+D)\frac{N}{QM^4}\lesssim \frac{Q^3}{2^{2l}M^4}+\frac{Q^2}{2^lM^4}\lesssim \frac{N^3}{M^{24}}+\frac{N^2}{M^{16}}\lesssim \frac{N^{3/2}}{M^6},$$
	the last inequality being true precisely when $M\gtrsim N^{1/12}$.
	Then, using that $FD\lesssim Q$ and thus also $F\lesssim \sqrt{Q}$
	$$F(\frac{Q}{2^l}+D)\frac{N}{QM^4}\lesssim \frac{\sqrt{Q}}{2^l}\frac{N}{M^4}+\frac{N}{M^4}\lesssim \frac{N}{M^4}\lesssim (\frac{N}{M^4})^{3/2}.$$
	We used that $Q\lesssim 2^{2l}$ precisely when $M\gtrsim N^{1/12}$.
	Then, since $P\le D$ we have
	$$\frac{QP}{DF2^l}\frac{Q}{2^l}\frac{N}{QM^4}\lesssim \frac{Q}{2^{2l}} \frac{N}{M^4}\lesssim (\frac{N}{M^4})^{3/2}.$$
	Finally, using the crude bound $P\lesssim Q$
	$$\frac{QP}{DF2^l}D\frac{N}{QM^4}\lesssim \frac{Q}{2^{l}} \frac{N}{M^4}\lesssim \frac{N^2}{M^{12}}\lesssim (\frac{N}{M^4})^{3/2},$$
	with the last inequality true precisely when $M\gtrsim N^{1/12}$.
	\\
	\\
	Case 2. If $FD\gtrsim Q$, recall that we got the bound $R\lesssim N/M^{4}$ in \eqref{betterandnoM}, for all values of $M\ge 1$, in particular for $M\gtrsim N^{1/12}$. This bound is better than the needed $(N/M^4)^{3/2}$ since $M\le N^{1/4}$.

	\subsection{A potential strategy for breaking the $M\ges N^{1/10}$ barrier}
	We describe a plausible strategy  to prove sharp level set estimates \eqref{e7600} for  $M$  slightly smaller than $N^{1/10}$, by reducing things to the purely number theoretic Conjecture \ref{conditional}. In particular, we put flesh on the intuition recorded in Remark \ref{random}. The new idea is to consider joint neighbors of three vertices, rather than two. We do not quantify the gain and do not aim for optimizing the argument,  as the approach is conditional to Conjecture \ref{conditional}. We show heuristics that suggest that, if executed efficiently, the three-neighbor argument should extend the range to $M\ggg N^{1/12}$. Extra leverage may further be obtained by considering joint neighbors of more than three vertices.

	Assume $M$ is sufficiently close to $N^{1/10}$, by a margin determined at the end of the argument. At the very least, we assume $M\ggg N^{1/12}$. We find it convenient to do a proof by contradiction. Assume we have a configuration for which
	\begin{equation}
		\label{qw1}
		R\ggg N/M^4.\end{equation}
	This allows us to assume that
	\begin{equation}
		\label{qw2}
		Q^3\ggg N2^{2l},
	\end{equation}	
		otherwise Remark \ref{djhjdhjhjh} is applicable.
	
	In particular,
	\begin{equation}
		\label{qw4}
		2^l\lll N^{2/5}.
	\end{equation}
	
	Finally,  Remark \ref{djhjdhjhjh}  also allows us to assume that
	the graph is dominated by some $D=N^{\alpha}$, with $\alpha>0$ as small as we wish, subject to Conjecture \ref{conditional}. In other words, we may assume that for some $F\le P\le D=N^\alpha$ we have
	$$
	\sum_{(v_1,v_2)\in\Pc_{D,P,F}}\Nc_{D,P,F}(v_1,v_2)\ges R^3/K^2.
	$$
	As we have seen earlier, this implies the existence of  a collection $S$ of distinct
	\begin{equation}
		\label{qw3}
		L\ges R/K
	\end{equation}
	vertices $v_1,v_2,\ldots,v_{L}$ such that
	$(v_1,v_i)\in\Pc_{D,P,F}$ for each $2\le i\le L$.
	Let $\Nc_i$ be a subset of $\Nc_{D,P,F}(v_1,v_i)$ with $|\Nc_i|\approx R/K^2$. We have that
	$$ \frac1R\|\sum_{i=2}^L1_{\Nc_i}\|_1^2\le\|\sum_{i=2}^L1_{\Nc_i}\|_2^2=\|\sum_{i=2}^L1_{\Nc_i}\|_1+\sum_{i\not=j}|\Nc_i\cap\Nc_{j}|.$$
	We may assume that \begin{equation}
	\label{jijigjiogjioj}
	K\lll R^{1/3},
	\end{equation}
	 as the opposite inequality combined with \eqref{qw4} contradicts \eqref{qw1}, as follows
	$$R\les K^3\sim \frac{2^{3l/2}}{M^6}\lll\frac{N^{3/5}}{M^6}\lll \frac{N}{M^4}.$$
	Now \eqref{jijigjiogjioj} implies that
	$$\|\sum_{i=2}^L1_{\Nc_i}\|_1\ges R^2/K^3\ggg R,$$
	leading to
	$$\frac1{R}(LR/K^2)^2\les \frac1R\|\sum_{i=2}^L1_{\Nc_i}\|_1^2\lesssim \sum_{2\le i\not=j\le L}|\Nc_i\cap\Nc_{j}|.$$
	As $|\Nc_i\cap\Nc_{j}|\les R/K^2$, it follows that for at least  $\ges L^2/K^2$ pairs $i\not=j$ we have $|\Nc_i\cap\Nc_{j}|\ges R/K^4$. In particular, there is some $2\le i\le L$, say $i=2$, such that $|\Nc_2\cap\Nc_{j}|\ges R/K^4$ for at least $\ges L/K^2$ values $j$.
	\smallskip
	
	We can be more precise. Write $v_j=(x_j,t_j)$. We may think of two distinct $t_j$ as being $1/N^2$-separated. At most $O(N^{1/2}/M^2)$ of the vertices in $S$ may share a fixed second entry $t$, see  Remark \ref{duiweyfucfi90gi}. Thus there are $\lesssim \frac{N^{3/2}}{M^2Q2^l}$ vertices $v_j$ with second entry $t_j$ within $\lesssim 1/NQ2^l$ from $t_2$.
	
	Note that, using first \eqref{qw3}, then \eqref{qw1}, then \eqref{qw2} and the fact that $M\ge N^{1/24}$ we find
	$$\frac{L}{K^2}\ges \frac{R}{K^3}\ggg \frac{NM^2}{2^{3l/2}}\ggg \frac{N^{3/2}}{M^2Q2^l}.$$

	It follows that there are $V\ges L/K^2$
	values of $j$ with $t_{j}$ being $\gg 1/NQ2^l$ away from $t_2$  and such that
	$|\Nc_2\cap\Nc_{j}|\ges R/K^4$. For our argument it suffices to pick one such $j$, say $j=3$. However, we need to make sure that $V$ is at least 1.  
		Using \eqref{qw3} and  \eqref{jijigjiogjioj} we get
	$$V\ges \frac{L}{K^2}\ges \frac{R}{K^3}\ggg 1.$$
	This together with \eqref{transitiviti} shows that $V\ggg 1$. We record our findings below
	\begin{equation}
	\label{hfcrfycrft767t687t76y}
	|\Nc_2\cap\Nc_{3}|\ges R/K^4\text{ and }|t_3-t_2|\gg 1/NQ2^l.
	\end{equation}
	\smallskip

	Lemma \ref{l7} shows that there are $\les N^\alpha+ Q^3/2^lN$  solutions $(a_1,a_2,q_1,q_2)$ with $\gcd(a_i,q_i)=1$ and $\gcd(q_1,q_2)\sim N^\alpha$ for the inequality
	$$|\frac{a_1}{q_1}-\frac{a_2}{q_2}-(t_2-t_1)|\lesssim \frac1{NQ2^l}.$$
	\smallskip

	Let us now see what happens if we factor  the vertex $v_3$ into the analysis.
	If $v\in \Nc_2\cap \Nc_3$ satisfies  $(v-v_1)\blacksquare(a_1,b_1,q_1)$, $(v-v_2)\blacksquare(a_2,b_2,q_2)$,
	$(v-v_3)\blacksquare(a_3,b_3, q_3)$
	then we have
	\begin{equation}
		\label{uffinal1}
		|\frac{a_1}{q_1}-\frac{a_2}{q_2}-(t_2-t_1)|\lesssim \frac1{NQ2^l}\end{equation}
	and \begin{equation}
		\label{uffinal2}
		|\frac{a_1}{q_1}-\frac{a_3}{q_3}-(t_3-t_1)|\lesssim \frac1{NQ2^l}.
	\end{equation}

	The following conjecture seems plausible. Since we aim for improvements for $M$ only slightly smaller than $N^{1/10}$ (and certainly the argument needs $M\ggg N^{1/12}$, which implies $Q\lesssim N/2^l\les N/M^4\lll N^{2/3}$), we may restrict attention to  $Q\le N^{2/3}$. This conjecture introduces the saving $N^\beta$ over the number of solutions for just one equation.
	\begin{con}
		\label{conditional}	
		There is $\beta>0$ such that for all small enough $\alpha>0$ the following is true.
		
		Assume that $Q2^l\lesssim N$ with $2^l,Q\ge 1$ and $Q\le N^{2/3}$. For each $t,t'$ with $|t-t'|\gg 1/NQ2^l$, the number of solutions $(a_1,a_2,a_3,q_1,q_2,q_3)$ with  $q_i\sim Q$, $|a_i|<q_i$, $\gcd(a_i,q_i)=1$, $\gcd(q_1,q_2),$ $\gcd(q_1,q_3)\le N^\alpha$, for the system of inequalities
		$$|\frac{a_1}{q_1}-\frac{a_2}{q_2}-t|\lesssim \frac1{NQ2^l}$$
		and
		$$|\frac{a_1}{q_1}-\frac{a_3}{q_3}-t'|\lesssim \frac1{NQ2^l}$$
		is $\les N^\alpha+\frac{Q^3}{2^lN^{1+\beta}}$.
	\end{con}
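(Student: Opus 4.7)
The natural first step is to subtract the two defining inequalities to obtain the single constraint
\[\left|\frac{a_2}{q_2}-\frac{a_3}{q_3}-(t'-t)\right|\lesssim \frac{1}{NQ2^l}.\]
A direct Farey count (of the type underlying Case~1 of the proof of Lemma~\ref{l7}) bounds the number of quadruples $(a_2,a_3,q_2,q_3)$ solving this by $\les 1+Q^3/(N 2^l)$. For each such quadruple, the Farey fraction $a_1/q_1$ is then forced into a window of length $\lesssim 1/(NQ2^l)$ about $a_2/q_2+t$, contributing $\les 1+Q/(N 2^l)$ Farey choices. The resulting product $\les 1 + Q^3/(N 2^l)$ merely reproduces the single-equation bound and falls short of the conjectured upper bound by exactly the factor $N^\beta$.

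The missing $N^\beta$ must come from the heuristic \emph{independence} of the two inequalities. Treating the $\sim Q^2$ reduced fractions $a_1/q_1$ as uniformly distributed, each equation is satisfied with probability $\sim Q^2\delta = Q/(N2^l)$ where $\delta=1/(NQ2^l)$, so by independence the joint count should be of order
\[Q^2\cdot(Q^2\delta)^2 = \frac{Q^4}{N^2 2^{2l}},\]
which is $\les Q^3/(N^{1+\beta}2^l)$ whenever $Q/(N2^l)\les N^{-\beta}$; in view of $Q\le N^{2/3}$ this holds for any $\beta<1/3$. The plan is to realize this heuristic rigorously. Writing $N_j(a_1,q_1)$ for the number of Farey partners satisfying equation $j\in\{1,2\}$, I would run Cauchy--Schwarz
\[\sum_{(a_1,q_1)}N_1\,N_2\le \Bigl(\sum N_1^{2}\Bigr)^{1/2}\Bigl(\sum N_2^{2}\Bigr)^{1/2}\]
and estimate the energy sums on the right. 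The coprimality hypotheses $\gcd(q_1,q_j)\le N^\alpha$ are indispensable here: after reducing $a_1/q_1-a_j/q_j$ to lowest terms its denominator is $\ge Q^2/N^{2\alpha}$, placing the relevant fractions on the finer scale $\gtrsim N^{-O(\alpha)}Q^{-4}$. A large-sieve-type inequality for Farey fractions at this scale should then extract the needed $N^\beta$ saving, with the $N^\alpha$ term in the conjectured bound absorbing the boundary/trivial contributions and any $N^{O(\alpha)}$ losses incurred along the way.

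The principal obstacle, and presumably why the author leaves this as a conjecture, is a resonance phenomenon. If $t-t'$ admits an abnormally good rational approximation $A/q$ with $q$ small, then the pairs $(a_2/q_2,a_3/q_3)$ solving the subtracted inequality cluster along the arithmetic progression $a_2/q_2-a_3/q_3\approx A/q$, the joint Farey count inherits an anomalous density, and the independence heuristic breaks down. A successful proof must therefore proceed via a dichotomy: either $t-t'$ is Diophantinely generic at scale $Q$, in which case the equidistribution/large-sieve bound above applies; or $t-t'$ is resonant, in which case the hypotheses $\gcd(q_1,q_j)\le N^\alpha$ should be leveraged to force the resonant denominator $q$ to divide $q_1$, thereby confining $q_2,q_3$ to a thin, structured subfamily that can be counted directly. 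Making this dichotomy uniform in $t,t'$ with only $N^\alpha$ loss is the core technical difficulty, and appears to require nontrivial input of arithmetic flavor, possibly in the form of bounds on Kloosterman-type fractions.
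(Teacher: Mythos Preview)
This statement is a \emph{conjecture} in the paper, not a theorem; the paper does not prove it. The author explicitly calls it ``plausible,'' gives only the generic-count heuristic $\sum\lesssim Q^6/(NQ2^l)^2=Q^4/N^22^{2l}$ (the same calculation you reproduce), explains why the separation $|t-t'|\gg 1/NQ2^l$ is needed, and closes by remarking that the conjecture ``seems difficult even in the extreme case $\alpha=0$.'' There is therefore no paper proof to compare against.

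Your write-up is not a proof either, and you are candid about this: the first paragraph recovers only the single-equation bound $\les 1+Q^3/(N2^l)$, the Cauchy--Schwarz/large-sieve plan is a heuristic outline, and you correctly flag the resonance obstruction when $t-t'$ has a small-denominator approximation as the essential difficulty. Two concrete gaps are worth naming. First, the Cauchy--Schwarz step $\sum N_1N_2\le(\sum N_1^2)^{1/2}(\sum N_2^2)^{1/2}$ decouples the two constraints entirely, so the energies $\sum N_j^2$ no longer see the separation hypothesis $|t-t'|\gg 1/NQ2^l$; without that hypothesis the bound can genuinely fail (take $t=t'$ and $a_2=a_3$, $q_2=q_3$), so Cauchy--Schwarz alone cannot succeed and some coupling between the two equations must be retained. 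Second, your resonant-case sketch asserts that $\gcd(q_1,q_j)\le N^\alpha$ forces the resonant denominator $q$ to divide $q_1$, but no mechanism is given; the subtracted inequality involves only $q_2,q_3$, and $q_1$ enters only through the original system, so this divisibility claim needs justification. In short, your discussion is a reasonable commentary on why the conjecture is hard, matching and somewhat extending the paper's own remarks, but it does not constitute a proof and the paper offers none.
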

	It is necessary to ask that $|t-t'|\gg 1/NQ2^l$, otherwise the two inequalities coincide, and, as observed earlier, the number of solutions may be as large as $\les N^\alpha+\frac{Q^3}{2^lN}$ (just consider $a_3=a_2,q_3=q_2$). We believe this separation condition is enough to compensate for the lack of any restriction on $\gcd(q_2,q_3)$, which our previous argument cannot enforce.
	
	The reason  why Conjecture \ref{conditional} holds for generic $t,t'$ is that there are $\sim Q^6$ tuples $(a_1,\ldots,q_3)$ and $\sim (NQ2^l)^2$ pairwise disjoint squares with side length $1/NQ2^l$.  Thus, there are $\lesssim Q^4/N^22^{2l}$ solutions for  generic $t,t'$. This is in fact $\les 1$ when $M\ges N^{1/12}$.
	\medskip

	If true, this conjecture proves that there are  $\les N^\alpha+Q^{3}/2^lN^{1+\beta}$ solutions $(a_1,\ldots,q_3)$ to the system \eqref{uffinal1}, \eqref{uffinal2}. This is an improvement over the earlier bound on the number of admissible pairs from Lemma \ref{l7}. The conjecture has the following (conditional) corollary.
	\begin{co}
	\label{oiofirgiurtguiiotuhiou}
	There is $\beta>0$ such that for all small enough $\alpha>0$ the following is true.
		
		Assume that $Q2^l\lesssim N$ with $2^l,Q\ge 1$ and $Q\le N^{2/3}$. For each $t,t'$ with $|t-t'|\gg 1/NQ2^l$, and for arbitrary $x,x'$,  the number of solutions $(a_1,a_2,a_3,b_1,b_2,b_3,q_1,q_2,q_3)$ with  $q_i\sim Q$, $|a_i|,|b_i|<q_i$, $\gcd(a_i,q_i)=1$, $\gcd(q_1,q_2),$ $\gcd(q_1,q_3)\le N^\alpha$, for the system of inequalities
		$$|\frac{a_1}{q_1}-\frac{a_2}{q_2}-t|\lesssim \frac1{NQ2^l},$$
		$$|\frac{a_1}{q_1}-\frac{a_3}{q_3}-t'|\lesssim \frac1{NQ2^l},$$
		$$|\frac{b_1}{q_1}-\frac{b_2}{q_2}-x|\lesssim \frac1{Q2^l},$$
		$$|\frac{b_1}{q_1}-\frac{b_3}{q_3}-x'|\lesssim \frac1{Q2^l}$$
		is $\les (N^\alpha+\frac{Q^3}{2^lN^{1+\beta}})(N^\alpha+\frac{Q}{2^l})$.
	\end{co}	
	\begin{proof}
	Using Conjecture \ref{conditional}, the first two equations determine $(a_1,a_2,a_3,q_1,q_2,q_3)$ up to $\les N^\alpha+\frac{Q^3}{2^lN^{1+\beta}}$ many choices. Let us fix such a six-tuple. Using Lemma \ref{l8} and the third equation determines the value of $b_1$ up to $\les N^\alpha+\frac{Q}{2^l}$ many choices. Finally, note that the third equation (together with $b_1$) determines $b_2$ uniquely, while the fourth equation determines $b_3$ uniquely.  
	
	\end{proof}

	Note that $\frac{Q}{2^l}\ggg N^{1/5}$, as a consequence of \eqref{qw2} and \eqref{qw4}.  Thus $Q/2^l$ dominates $N^\alpha$ if $\alpha\le 1/5$. We repeat the argument in Proposition \ref{p134}, applying Corollary \ref{oiofirgiurtguiiotuhiou} with $(x,t)=v_2-v_1$, $(x',t')=v_3-v_1$, and using \eqref{hfcrfycrft767t687t76y}. We get the bound 
	$$R/K^4\les |\Nc_2\cap\Nc_3|\les \frac{N}{Q2^l}(N^\alpha+\frac{Q^3}{2^lN^{1+\beta}})\frac{Q}{2^l}.$$
	This in turn proves that
	$$R\les \frac{N^{1+\alpha}K^4}{2^{2l}}+\frac{Q^3}{N^\beta M^82^l}\lesssim  \frac{N^{1+\alpha}}{M^8}+\frac{N^3}{N^{\beta}2^{4l}M^{8}}\les \frac{N}{M^4},$$
	with the last inequality being true as long as $M\ges N^{\frac{2-\beta}{20}}$ and $\alpha$ is small enough. This contradicts \eqref{qw1}.
	
	The argument described here works even if Conjecture \ref{conditional} is false for a small number of exceptions. Conjecture \ref{conditional} seems difficult even in the extreme case $\alpha=0$, and even when $\gcd(q_1,q_2)= \gcd(q_1,q_3)=\gcd(q_2,q_3)=1.$

\end{document}